
\documentclass[a4paper, 12pt]{amsart}
\usepackage[left=2.5cm, right=2.5cm, top=4cm]{geometry}

\usepackage{amsfonts}
\usepackage{amsthm}
\usepackage{graphicx}

\usepackage{tikz}

\usepackage{graphicx, amssymb, latexsym, amsfonts, amsmath, lscape, amscd,
amsthm, color, epsfig}

\usepackage[T1]{fontenc}
\usepackage[latin1]{inputenc}

\usepackage[babel=true]{csquotes}

\usepackage{setspace}

\onehalfspacing

\newtheorem{theorem}{Theorem}

\newtheorem{corollary}[theorem]{Corollary}

\newtheorem{lemma}[theorem]{Lemma}
\newtheorem{proposition}[theorem]{Proposition}
\newtheorem{problem}[theorem]{Problem}
\newtheorem{remark}[theorem]{Remark}

\newtheorem{definition}{Definition}

\usepackage{datetime}
\newdate{date}{16}{5}{2019}
\date{\displaydate{date}}






\author{{Amine El Sahili}$^1$ \and {Zeina Ghazo Hanna}$^1$}
\address{Lebanese University, Beirut, Lebanon}
\email{sahili@ul.edu.lb ; zeina$\_$hanna$\_$93@live.com}

\title[Oriented Hamiltonian Paths and Cycles]{About the Number of Oriented Hamiltonian Paths and Cycles in Tournaments}

\begin{document}

\maketitle
\begin{abstract}
We prove that a tournament $T$ and its complement $\overline{T}$ contain the same number of oriented Hamiltonian paths (resp. cycles) of any given type, as a generalization of Rosenfeld's result proved for antidirected paths.
\end{abstract}

\medskip
\section{Introduction}

An oriented Hamiltonian path in a tournament is an oriented path containing all its vertices, and if this path is directed, then it is said to be a directed Hamiltonian path. The definitions are similar for cycles. Counting Hamiltonian paths and cycles in a tournament is a well known problem. Given a certain type of oriented Hamiltonian paths (resp. cycles), one may ask how many such paths (resp. cycles) can be found in a tournament. No exact value of these numbers was given. What was done in this area is bounding the number of only the directed Hamiltonian paths (resp. cycles) in tournaments, and working on characterizing the tournaments having this minimum or maximum number.
\par The oldest result through this investigation was given by Szele \cite{Szele}, more than seventy years ago, who gave lower and upper bounds for the maximum number $P(n)$ of directed Hamiltonian paths in a tournament on $n$ vertices, $\frac{n!}{2^{n-1}}\leq P(n) \leq c_1\frac{n!}{2^{{\frac{3}{4}}n}},$ where $c_1$ is a positive constant independent of $n$. Then, the upper bound of $P(n)$ was improved by Alon \cite{Alon}: $P(n)\leq c_2.n^{\frac{3}{2}}\frac{n!}{2^{n-1}},$ where $c_2>0$ is independent of $n$. 
For the minimum number of directed Hamiltonian paths in a tournament, we can easily verify that it is equal to $1$, and this value corresponds to the transitive tournament. But in the case of strong tournaments, this number increases a lot, as for the nearly-transitive tournament of order $n$, 
where the number of directed Hamiltonian paths is equal to $2^{n-2}+1$. So in 1972, Moon \cite{Moon} gave upper and lower bounds for the minimum number $h_P(n)$ of directed Hamiltonian paths in strong tournaments of order $n$, and in 2006, after finding a characterization of strong tournaments, Busch \cite{Busch} improved this result by proving that the exact value of this minimum number is exactly equal to the upper bound given by Moon. Later on, Moon and Yang \cite{MoonYang}, constructed some tournaments, called the "special chains", linking between many nearly-transitive tournaments, 
and proved that they contain the minimum number of directed Hamiltonian paths, and that they are the only tournaments verifying this minimum. 
Concerning the maximum and minimum number of directed Hamiltonian cycles (also called Hamiltonian circuits) in tournaments, Thomassen \cite{Thomassen} was able, in 1980, giving an extension of Moon's result previously mentioned, to find the minimum number of these cycles in a 2-connected tournament. 
On the other hand, 
it can be proven, using the probabilistic methods, that the maximum number of directed Hamiltonian cycles in a tournament of order $n$ is greater than $\frac{(n-1)!}{2^n}$. However, Moon observed that it seems difficult to give explicit tournaments with such a large number of directed Hamiltonian cycles.
\par An antidirected path is an oriented path whose arcs have successively opposite directions. Rosenfeld \cite{Rosenfield2} proved in 1974 that the number of antidirected Hamiltonian paths starting with a forward arc is equal to the number of antidirected Hamiltonian paths starting with a backward arc, in any tournament, which can be stated as: the number of antidirected Hamiltonian paths in any tournament $T$ is equal to the number of antidirected Hamiltonian paths in the complement of $T$, denoted by $\overline{T}$.
\par In this paper, we generalize Rosenfeld's result for any type of oriented Hamiltonian paths, and also for cycles: We prove that a tournament $T$ and its complement $\overline{T}$ contain the same number of oriented Hamiltonian paths (resp. cycles) of any given type. Then we establish this fact for any digraph $H$ whose maximal degree is less than or equal to 2.

\section{Basic definitions and preliminary results}

We will follow in this paper the same definitions given in \cite{ElSahili-AA2}.\\
\linebreak
Let $\alpha=(\alpha_1,\alpha_2,\dots,\alpha_s); \ s\geq 1, \ \alpha_i\in\mathbb{Z}, \ \alpha_i \cdot \alpha_{i+1}<0 \ \forall \ i=1,\dots,s-1$.
\par An oriented path $P$ is said to be of type $P(\alpha_1,\alpha_2,\dots,\alpha_s)$ if $P$ is formed by $s$ blocks (i.e. maximal directed subpaths) $I_1, I_2, \dots, I_s$ such that $length(I_i) = \mid\! I_i \!\mid = \mid\! \alpha_i \!\mid$ and with $x_i, y_i$ being the ends of the block $I_i$, $I_i \cap I_{i+1} = \{y_i\} =
\{x_{i+1}\}$, 
the following condition is verified: $\forall \ i=1,\dots,s, \ \alpha_i>0 \iff I_i \text{ is directed from } x_i \text{ to } y_i$. We note $end(I_i)=\{x_i,y_i\}$, and we write $P = I_1 I_2 \dots I_s$. For $u,v\in I_i$, $I_i[u,v]$ denotes the subpath of $I_i$ of ends $u$ and $v$. 
This notation can be extended by allowing $\alpha_i$ to be $0$, by considering $P(\alpha_1, ..., \alpha_i, 0, \alpha_{i+2}, ..., \alpha_s) = P(\alpha_1,...,\alpha_i+\alpha_{i+2},...,\alpha_s)$ (remark that in this case, $\alpha_i$ and $\alpha_{i+2}$ have the same sign),
$P(0,\alpha_2,...,\alpha_s)=P(\alpha_2,...,\alpha_s)$,
and $P(\alpha_1,...,\alpha_{s-1},0)=P(\alpha_1,...,\alpha_{s-1})$, and we say that $P(\alpha)$ is a standard type of a path $P$ if $\alpha$ contains no zero components. In this paper, we will always consider standard types of paths unless a non-standard type appears in calculations.
\par Note that a path $P=v_1v_2\dots v_r$ that is of type $P(\alpha_1,\dots,\alpha_s)$ with respect to this enumeration is also of type $P(-\alpha_s,\dots,-\alpha_1)$ with respect to the other enumeration $v_rv_{r-1}\dots v_1$ denoting it, so we remark that any path has at most two types. Moreover, two paths $P$ and $P^{'}$ in a tournament $T$ are said to be equal if they have the same set of arcs, i.e. $E(P) = E(P^{'})$.
\par For $\alpha = (\alpha_1,\dots,\alpha_s)$ in $\mathbb{Z}^s$, we denote by $- \alpha$ the tuple $(-\alpha_1,\dots,-\alpha_s)$ and by
$\overline{\alpha}$ the tuple $(\alpha_s,\alpha_{s-1}\dots,\alpha_1)$. Let $T$ be a tournament, then $\mathcal{P}_T(\alpha_1,\dots,\alpha_s)$ is defined to be the set of oriented paths in $T$ of type $P(\alpha_1,\dots,\alpha_s)$ and $f_T(\alpha_1,\alpha_2,\dots,\alpha_s)$ denotes the cardinality of this set. It can be easily verified that:
$$\mathcal{P}_T(\alpha) = \mathcal{P}_T(\beta) \iff \alpha = \beta \text{ or } \alpha = -\overline{\beta}.$$
Let $\alpha=(\alpha_1,\dots,\alpha_s); \ \alpha_i\in\mathbb{Z}, \ \alpha_{i}\cdot\alpha_{i+1}<0 \ \forall \ i=1,\dots,s-1, \ \alpha_s\cdot\alpha_1<0$.
\par An oriented cycle $C$ is said to be of type $C(\alpha_1,\dots,\alpha_s)$ if $C$ is formed by $s$ blocks $I_1, I_2,\dots, I_s$, with  $end(I_i) = \{x_i,y_i\}$, $\mid\! I_i \!\mid = \mid\!\alpha_i\!\mid$ and $I_i\cap I_{i+1} = \{y_i\} = \{x_{i+1}\}, \ 1\leq i\leq s-1$ and $I_s\cap I_1 = \{y_s\} = \{x_1\}$, 
such that  $\forall \ i=1,\dots,s, \ \alpha_i>0 \iff I_i \text{ is directed from } x_i \text{ to } y_i$. We write $C=I_1I_2...I_s$. Note that for cycles, if $s\neq 1$ ($s=1$ is the case of Hamiltonian circuits), then $s$ must be even. As for paths, we may also allow $\alpha_i$ to be $0$ for cycles, by considering $C(\alpha_1,\dots,\alpha_{i-1},0,\alpha_{i+1},\dots,\alpha_s)=C(\alpha_1,\dots,\alpha_{i-1}+\alpha_{i+1},\dots,\alpha_s)$, $C(0,\alpha_2,\dots,\alpha_s)=C(\alpha_2+\alpha_s,\alpha_3,\dots,\alpha_{s-1})$ and 
$C(\alpha_1,\dots,\alpha_{s-1},0)=C(\alpha_1+\alpha_{s-1},\alpha_2,\dots,\alpha_{s-2})$. We say that $C(\alpha)$ is a standard type of a cycle $C$ if $\alpha$ contains no zero components. In this paper, we will also always consider standard types of cycles unless a non-standard type appears in calculations.
\par If $T$ be a tournament, then $\mathcal{C}_T(\alpha_1,\dots,\alpha_s)$ is defined to be the set of oriented cycles of $T$ of type $C(\alpha_1,\dots,\alpha_s)$ and
$g_T(\alpha_1,\dots,\alpha_s)$ denotes the cardinality of this set. We may also verify that:
\begin{eqnarray*}
  \mathcal{C}_T(\alpha) = \mathcal{C}_T(\beta) \iff & & \! \beta = (\alpha_i,\alpha_{i+1},\dots,\alpha_s,\alpha_1,\dots,\alpha_{i-1}) \\
   & & \! \text{or } \beta = (-\alpha_i,-\alpha_{i-1},\dots,-\alpha_1,-\alpha_s,\dots,-\alpha_{i+1}),
\end{eqnarray*}
for some $1\leq i\leq s$.\\
\linebreak
A tuple $\alpha$ is said to be symmetric if $\alpha = - \overline{\alpha}$.
\par An oriented path $P$ (resp. cycle $C$) is said to be symmetric if there exists  a tuple $\alpha$ that is symmetric, such that $P$ (resp. $C$) is of type $P(\alpha)$ (resp. $C(\alpha)$).
Otherwise, the path $P$ (resp. cycle $C$) is not symmetric.\\
\linebreak
Let $T$ be a tournament on $n$ vertices. An oriented 
cycle $C$ in $T$ is said to be generated by an oriented 
path $P = x_1 x_2 \dots x_n$ if $C = P \cup \langle\{x_1,x_n\}\rangle$. We write $C=C_P$.
For more simplicity, we write $uv$ instead of $<\{u,v\}>$.
\par The relation $\mathcal{R}$ defined on the set of oriented 
 paths in $T$ by:
\begin{equation*}
  P \mathcal{R} P^{'} \iff C_P = C_{P^{'}}
\end{equation*}
is an equivalence relation, and so is $\mathcal{R}_{\alpha}$, the restriction of $\mathcal{R}$ on the set $\mathcal{P}_T(\alpha)$.
\par Let $P=v_1v_2\dots v_n$ and $P'$ be two oriented 
paths in a tournament $T$ of order $n$, it can be easily remarked that $P \mathcal{R} P^{'}$ if and only if $P=P'$ or $P' = v_{i}v_{i+1}\dots v_n v_1 v_2\dots v_{i-1}$ for some $2\leq i\leq n$.
\begin{remark}\label{generatedcycles}
Let $P=v_1v_2\dots v_n$ be an oriented 
path in a tournament $T$, of some type $P(\alpha)=P(\alpha_1,\dots,\alpha_s)$, and let $C=C_P$ be the cycle generated by $P$ in $T$ which is of some type $C(\beta)$. We will see what different values $\beta$ could take:
\begin{itemize}
\item \textbf{Case 1:} $s$ is even. Then if $\alpha_1 >0$ (which means $\alpha_s<0$), we have $\beta=(\alpha_1+1,\alpha_2,\dots,\alpha_s)$ or $\beta=(\alpha_1,\dots,\alpha_{s-1},\alpha_s-1)$ whether $(v_n,v_{1})$ or $(v_{1},v_n)$ $\in E(T)$ respectively, while if $\alpha_1 <0$ (i.e. $\alpha_s>0$), then $\beta=(\alpha_1-1,\alpha_2,\dots,\alpha_s)$ or $\beta=(\alpha_1,\dots,\alpha_{s-1},\alpha_s+1)$ whether $(v_1,v_{n})$ or $(v_{n},v_1)$ $\in E(T)$ respectively.
\item \textbf{Case 2:} $s$ is odd. Then if $\alpha_1 >0$ (which means $\alpha_s>0$ also), we have $\beta=(-1,\alpha_1,\dots,\alpha_s)$ or $\beta=(\alpha_s+1+\alpha_1,\alpha_2,\dots,\alpha_{s-1})$ whether $(v_1,v_{n})$ or $(v_{n},v_1)$ $\in E(T)$ respectively, while if $\alpha_1 <0$ (and so is $\alpha_s$), then $\beta=(1,\alpha_1,\dots,\alpha_s)$ or $\beta=(\alpha_s-1+\alpha_1,\alpha_2,\dots,\alpha_{s-1})$ whether $(v_n,v_{1})$ or $(v_{1},v_n)$ $\in E(T)$ respectively.
\end{itemize} 
So we remark that every oriented 
path $P$ in a tournament $T$ may generate 2 types of cycles, that we will denote by $C(\beta)$ and $C(\beta')$ in the latter sections.
\end{remark}
\begin{remark}\label{zzzzzzzz}
If a path $P$ has the type $P(\alpha)=P(\alpha_1,\dots,\alpha_s)$ where $\alpha$ is symmetric, then the cycle $C_P$ generated by $P$ cannot be symmetric.\\
In fact, if $P$ has the type $P(\alpha)=P(\alpha_1,\dots,\alpha_s)$ and $\alpha$ is symmetric, thus $\alpha_1=-\alpha_s$, so $\alpha_1$ and $\alpha_s$ have opposite signs, which means that $s$ should be even. Thus by the previous remark, $C_P$ has one of these types: $C(\alpha_1+1,\dots,\alpha_s)$ or $C(\alpha_1-1,\dots,\alpha_s)$ or $C(\alpha_1,\dots,\alpha_s-1)$ or $C(\alpha_1,\dots,\alpha_s+1)$. But in all these cases, and due to the fact that $\alpha$ is symmetric, $C_P$ cannot be written as a succession of blocks having the type $C(\beta)$ where $\beta$ is symmetric, thus the cycle $C_P$ cannot be symmetric.
\end{remark}

\noindent Let $\alpha = (\alpha_1, \dots, \alpha_s)\in\mathbb{Z}^s$.
\par An integer $1\leq r\leq s$ is said to be a period of $\alpha$ if $[i\equiv j \ (mod \ r) \Rightarrow \alpha_{i_s}=\alpha_{j_s}]$
where $i_s$ is the unique integer in $\{1,2,\dots,s\}$ such that $i\equiv i_s \ (mod \ s)$.
\par Let $r(\alpha) = min\{r; \ r \text{ is a period of } \alpha\}$.
\par It can be shown that $r$ is a period of $\alpha$ $\iff r(\alpha)$ divides $r$, and consequently $r(\alpha)$ divides $s$, since $s$ is a trivial period of $\alpha$.
\par Let $t(\alpha)=\frac{s}{r(\alpha)}$.
\begin{proposition}\label{t}
Let $\alpha = (\alpha_1,\dots,\alpha_s)\in\mathbb{Z}^s$ and $\alpha^{'}=\overline{\alpha}=(\alpha_s,\dots,\alpha_1)$, then $r(\alpha)=r(\overline{\alpha})$ and $t(\alpha)=t(\overline{\alpha})$.
\end{proposition}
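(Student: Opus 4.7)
The plan is to show that $\alpha$ and $\overline{\alpha}$ have exactly the same set of periods; the claim about $r$ and $t$ then follows immediately by taking minima and dividing $s$.

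First I would reduce the statement to a single implication: by symmetry, since $\overline{\overline{\alpha}} = \alpha$, it suffices to prove that if $r$ is a period of $\alpha$ then $r$ is a period of $\overline{\alpha}$. So fix such an $r$; in particular $r \mid s$ (as noted right before the proposition).

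Next I would use the reversal index map $\sigma(i) = s+1-i$. This is an involution on $\{1,\dots,s\}$ and satisfies $\overline{\alpha}_i = \alpha_{\sigma(i)}$ by definition. The key arithmetic observation is that $\sigma$ is compatible with congruence mod $r$: if $i \equiv j \pmod r$, then $s+1-i \equiv s+1-j \pmod r$, because $r \mid s$. Hence, for any $i,j$ with $i \equiv j \pmod r$,
\begin{equation*}
\overline{\alpha}_{i_s} \;=\; \alpha_{s+1-i_s} \;=\; \alpha_{(s+1-i_s)_s},
\end{equation*}
and since $s+1-i_s \in \{1,\dots,s\}$ already lies in the reduced range, the subscript reduction mod $s$ is trivial. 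The same holds for $j$, and the two indices $s+1-i_s$ and $s+1-j_s$ are congruent mod $r$, so by the hypothesis that $r$ is a period of $\alpha$, their $\alpha$-values coincide. This yields $\overline{\alpha}_{i_s} = \overline{\alpha}_{j_s}$, i.e.\ $r$ is a period of $\overline{\alpha}$.

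Finally, applying the same argument with $\overline{\alpha}$ in place of $\alpha$ gives the reverse inclusion of period sets, hence $r(\alpha) = r(\overline{\alpha})$, and consequently $t(\alpha) = s/r(\alpha) = s/r(\overline{\alpha}) = t(\overline{\alpha})$. The only mild subtlety — which I would write out carefully to avoid confusion — is the bookkeeping that the definition of period quantifies over all integers $i,j$ and then reduces modulo $s$; the fact that $r \mid s$ is what makes this reduction harmless, so this is the one spot worth verifying explicitly rather than the routine case-check one might otherwise expect.
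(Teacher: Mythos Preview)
Your argument has a small but real gap: you assert that every period $r$ of $\alpha$ satisfies $r \mid s$, citing the remark before the proposition. But that remark only gives $r(\alpha) \mid s$; the full set of periods is the set of multiples of $r(\alpha)$ lying in $\{1,\dots,s\}$, and these need not divide $s$ (for a constant tuple with $s=6$, every $r\in\{1,\dots,6\}$ is a period, including $r=4$). Moreover, the spot where $r \mid s$ is actually needed is not the implication $i\equiv j \Rightarrow s+1-i\equiv s+1-j \pmod r$ (that holds unconditionally), but the passage from $i\equiv j\pmod r$ to $i_s\equiv j_s\pmod r$: since $i_s-i$ and $j_s-j$ are multiples of $s$, this step fails in general without $r\mid s$.

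The fix is immediate and collapses your proof onto the paper's: instead of trying to match the full period sets, run your reversal-index computation only for $r=r(\alpha)$, where $r\mid s$ is known. Your argument then shows that $r(\alpha)$ is a period of $\overline{\alpha}$, whence $r(\overline{\alpha})\le r(\alpha)$; symmetry via $\overline{\overline{\alpha}}=\alpha$ gives equality. This is exactly the paper's proof --- its index $k=s-p_s+1$ is your $\sigma(p_s)$, and its line ``$k\equiv -p+1 \pmod s$ thus $k\equiv -p+1 \pmod r$'' is precisely the use of $r(\alpha)\mid s$ that you flagged as the one subtle point.
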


\begin{proof}
  Let $\alpha^{'} = (\alpha^{'}_1,\alpha^{'}_2,\dots,\alpha^{'}_s)=(\alpha_s,\dots,\alpha_1)=\overline{\alpha}$,
  and $r(\alpha^{'})=r^{'}, \ r(\alpha)=r$. \\
  Let $l,p$ be two integers such that $p\equiv l \ (mod \ r)$.
  We would like to prove that $\alpha^{'}_{p_s} = \alpha^{'}_{l_s}$.
  We have $\alpha^{'}_{p_s} = \alpha_k; \ k=s-p_s+1$, i.e. $k\equiv -p+1 \ (mod \ s)$ thus $k\equiv -p+1 \ (mod \ r)$.
  Also, $\alpha^{'}_{l_s} = \alpha_j; \ j=s-l_s+1$, i.e. $j\equiv -l+1 \ (mod \ s)$ thus $j\equiv -l+1 \ (mod \ r)$.
  Now $p\equiv l \ (mod \ r) \Rightarrow -p+1 \equiv -l+1 \ (mod \ r) \Rightarrow k\equiv j \ (mod \ r) \Rightarrow
  \alpha_k = \alpha_j \Rightarrow \alpha^{'}_{p_s} = \alpha^{'}_{l_s}$. Thus, $r$ is a period of $\alpha^{'}$ and $r^{'}\leq r$. Similarly, we can prove that $r\leq r^{'}$. Hence, $r=r^{'}$. Consequently, $t(\alpha)=t(\alpha^{'})$.
\end{proof}
We may also remark that $t(\alpha)=t(-\alpha)$, and we can prove that $\forall 1\leq i \leq s$, we have $t(\alpha)=t(\alpha_i,\alpha_{i+1},\dots,\alpha_s,\alpha_1,\dots,\alpha_{i-1})$. As a result, $\forall 1\leq i \leq s$ we have:
\begin{eqnarray*}
t(\alpha)= && t(-\alpha_i,-\alpha_{i+1},\dots,-\alpha_s,-\alpha_1,\dots,-\alpha_{i-1})\\ = && t(\alpha_i,\alpha_{i-1},\dots,\alpha_1,\alpha_s,\dots,\alpha_{i+1}) \\ =&& t(-\alpha_i,-\alpha_{i-1},\dots,-\alpha_1,-\alpha_s,\dots,-\alpha_{i+1}).
\end{eqnarray*}
Let $C=I_1 I_2 \dots I_s$ be an oriented cycle of type $C(\beta)=C(\beta_1,\dots,\beta_s)$ and let $r=r(\beta)$.
For $1\leq i<j\leq s$, $I_i$ and $I_j$ are said to be similar if $j\equiv i \ (mod \ r)$. This is equivalent to say that $j-i$ is a period of $\beta$.
For every $1\leq i\leq s$, there are $t(\beta)-1$ blocks similar to $I_i$. It follows that if $I_i$ and $I_j$ are similar, then $\beta_i=\beta_j$,
and for any non-negative integer $k$, $I_{[i+k]_s}$ and $I_{[j+k]_s}$ are similar.
\par If $C=I_1 I_2 \dots I_s$ is an oriented cycle of type $C(\beta)=C(\beta_1,\dots,\beta_s)$, two vertices $u,v\in C$ are said to be clones (with respect to $C$) if:
\begin{itemize}
  \item $u$ and $v$ belong to similar blocks, say $I_i$ and $I_j$.
  \item $l(I_i[x_i,u]) = l(I_j[x_j,v])$.\\
  It obviously follows that $l(I_i[u,y_i]) = l(I_j[v,y_j])$.
\end{itemize}
If $C\in \mathcal{C}_T(\beta)$, then each vertex of $C$ has $t(\beta)-1$ clones. 

\bigskip

\section{Oriented Hamiltonian paths}

Recall that Rosenfeld \cite{Rosenfield2} proved in 1974 that the number of antidirected Hamiltonian paths starting with a forward arc is equal to the number of antidirected Hamiltonian paths starting with a backward arc, in any tournament. In this section, we will generalize Rosenfeld's result, showing that in a tournament, the number of Hamiltonian paths of a certain type $P(\alpha)$ is equal to the number of Hamiltonian paths of type $P(-\alpha)$:
\begin{theorem}\label{f(a)=f(-a)}
Let $\alpha=(\alpha_1,\dots,\alpha_s)\in \mathbb{Z}^s$; $\alpha_i \cdot \alpha_{i+1} <0$, $\alpha_1 \geq 0$, and let $T$ be a tournament of order $n$; $n=\sum\limits_{i=1}^s \mid \alpha_i \mid +1$. We have:
$$f_T(\alpha)=f_T(-\alpha).$$
\end{theorem}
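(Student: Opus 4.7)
The plan is first to reformulate the identity. The operation of reversing every arc sends $T$ to $\overline{T}$ and carries each oriented path of type $P(\alpha)$ in $T$, read on the same vertex sequence, to an oriented path of type $P(-\alpha)$ in $\overline{T}$, so $f_T(\alpha)=f_{\overline{T}}(-\alpha)$. The theorem is therefore equivalent to the (a priori stronger-looking) statement $f_T(\alpha)=f_{\overline{T}}(\alpha)$, i.e.\ that $T$ and $\overline{T}$ contain the same number of Hamiltonian paths of any given type, matching the formulation announced in the abstract.

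I would then attempt the proof by induction on $n=1+\sum_{i=1}^{s}|\alpha_i|$, with the base case immediate. For the inductive step, fix a vertex $v\in V(T)$ and partition $\mathcal{P}_T(\alpha)$ according to the position of $v$: first vertex, last vertex, or interior vertex. In the endpoint cases, deleting $v$ leaves a Hamiltonian path of $T-v$ of the shorter type obtained from $\alpha$ by reducing $|\alpha_1|$ or $|\alpha_s|$ by $1$, so that the contributions of endpoint-positioned $v$ to $f_T(\alpha)$ are expressible in terms of $f_{T-v}$ applied to a shorter type; since $\overline{T}-v=\overline{T-v}$, the induction hypothesis matches the $T$- and $\overline{T}$-counts in these cases. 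When $v$ is interior, deleting $v$ and keeping the arc of $T$ between its two neighbors gives a Hamiltonian path of $T-v$ whose type is obtained from $\alpha$ by a local modification that depends on the block of $\alpha$ containing $v$ and on the direction of that connecting arc; summing over all interior positions and applying the induction hypothesis should close the argument.

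The main obstacle is the interior-vertex case. When $v$ sits inside a block $I_j$ of sign $\varepsilon$ in $\alpha$, removing $v$ either shortens $I_j$ by $2$ (when the connecting arc in $T-v$ has sign $\varepsilon$) or splits $I_j$ into three consecutive blocks of signs $\varepsilon,-\varepsilon,\varepsilon$, with the middle block of length $1$ (when the connecting arc has sign $-\varepsilon$); a parallel split of cases occurs when $v$ lies at a block junction, where the two incident blocks may either shorten independently, merge, or collapse further, possibly decreasing $s$. Organising these sub-cases so that the $T$-versus-$\overline{T}$ difference cancels term by term is the delicate part. I expect Proposition~\ref{t} together with Remarks~\ref{generatedcycles} and~\ref{zzzzzzzz} to be crucial here: the period invariance of $\alpha$ under reversal controls the multiplicities with which each shorter type can appear, and the observation that a type-$\alpha$ path generates only one of two specific cycle types limits how the block structure can change when an interior vertex is removed.

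If the direct induction becomes too tangled, a viable fallback is the generated-cycle decomposition. By Remark~\ref{generatedcycles}, every type-$\alpha$ Hamiltonian path $P$ generates a Hamiltonian cycle $C_P$ of one of two specific types, and counting pairs $(P,C_P)$ in two ways turns the path identity $f_T(\alpha)=f_T(-\alpha)$ into a cycle identity of the form $g_T(\beta)=g_T(-\beta)$, up to multiplicities controlled by $r(\beta)$ and $t(\beta)$. One may then prove the cycle identity by the same inductive scheme, or prove the path and cycle identities simultaneously, which is natural since the next section of the paper announces the cycle version as well.
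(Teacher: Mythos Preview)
Your reformulation $f_T(\alpha)=f_{\overline{T}}(\alpha)$ is correct and is indeed equivalent to the theorem. However, the induction you sketch does not go through, and the gap appears already in the ``easy'' endpoint case, not only in the interior one.

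Suppose $\alpha_1>0$ and $v$ is the first vertex of a type-$P(\alpha)$ Hamiltonian path $P=v\,v_2\cdots v_n$ in $T$. Deleting $v$ does give a Hamiltonian path of $T-v$ of type $(\alpha_1-1,\alpha_2,\dots,\alpha_s)$, but you also need the arc $v\to v_2$, i.e.\ $v_2\in N^+_T(v)$. Hence the number of such $P$ is not $f_{T-v}(\alpha_1-1,\dots,\alpha_s)$ but the number of paths of that shorter type in $T-v$ \emph{starting at a vertex of $N^+_T(v)$}. The corresponding count in $\overline{T}$ restricts instead to starts in $N^-_T(v)$. The induction hypothesis only tells you that the \emph{total} counts in $T-v$ and $\overline{T-v}$ agree; it says nothing about these rooted counts, so the endpoint contributions do not match term by term. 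The interior case has the same defect, compounded by the fact that the arc you insert between the two neighbours of $v$ is unrelated to the two arcs at $v$ that you discarded. The tools you invoke (Proposition~\ref{t}, Remarks~\ref{generatedcycles} and~\ref{zzzzzzzz}) concern periods of block sequences and generated cycles; they give no control over these neighbourhood constraints. Your fallback is also problematic: in this paper the cycle identity $g_T(\beta)=g_T(-\beta)$ is \emph{deduced from} the path identity via Theorem~\ref{nonsym}, so using cycles to prove paths would be circular unless you supply an independent argument for cycles.

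The paper avoids all of this by a different decomposition. It passes to enumerations $e_T(\alpha)$ (Proposition~\ref{PE} converts back to $f_T$), and instead of deleting a single vertex it removes the \emph{set} $X$ of the first $\alpha_1$ vertices of an enumeration. The point is that the product set $A_X=\mathcal{E}_{\langle X\rangle}(\alpha_1-1)\times\mathcal{E}_{T-X}(\alpha_2,\dots,\alpha_{s+1})$ has cardinality that factors and, by induction on $s$, is invariant under $\alpha\mapsto-\alpha$. Splitting $A_X$ according to the direction of the single joining arc shows $|A_X|=|\mathcal{E}_X(\alpha)|+|\mathcal{E}_X(\alpha_1-1,\alpha_2-1,\alpha_3,\dots)|$; summing over $X$ and using a second induction on $\alpha_1$ finishes the proof. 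The key difference from your plan is that the split happens at a fixed location in the \emph{type} (the end of the first block), not at a fixed \emph{vertex}, so the two pieces live in disjoint subtournaments and the joining arc is the only interaction---and both of its orientations are accounted for simultaneously.
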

In order to prove this result, it is more adequate to work on enumerations of oriented paths. To this purpose, we define the following:\\
\linebreak
\noindent Let $\alpha=(\alpha_1,\dots,\alpha_s); \ \alpha_i\in\mathbb{Z}, \ \alpha_{i}\cdot\alpha_{i+1}<0 \ \forall \ i=1,\dots,s-1$, and let $T$ be a tournament on $n\geq\sum\limits_{i=1}^s \mid \alpha_i \mid +1$ vertices.
\begin{definition}
An enumeration $E=v_1v_2\dots v_r$ of some vertices of $T$ is said to be of type $E(\alpha_1,\dots,\alpha_s)$ if the path $P=v_1v_2\dots v_r$ is of type $P(\alpha_1,\dots,\alpha_s)$ with respect to this enumeration. 
\end{definition}

\begin{definition}
Considering the tournament $T$ of order $n$, $\mathcal{E}_T(\alpha_1,\dots,\alpha_s)$ is defined to be the set of enumerations of any $m= \sum\limits_{i=1}^s \mid \alpha_i \mid +1$ vertices of $T$, $m\leq n$, of type $E(\alpha_1,\dots,\alpha_s)$. We denote by $e_T(\alpha_1,\alpha_2,\dots,\alpha_s)$ the cardinality of this set.
\end{definition}

Remark that, unlike the case of paths, where every path has two types, if two enumerations $E$ and $E'$ have different types, then $E\neq E'$. 
In fact, we can easily prove the following property:
\begin{proposition}\label{SetEnumEqual}
Let $T$ be a tournament of order $n$, and $\alpha=(\alpha_1,\alpha_2,\dots,\alpha_s)$, $\beta=(\beta_1,\beta_2,\dots,\beta_{s'})$, $\sum\limits_{i=1}^s |\alpha_i|\leq n$, and  $\sum\limits_{i=1}^{s'} |\beta_i|\leq n$, we have:
$$\mathcal{E}_T(\alpha) = \mathcal{E}_T(\beta) \iff \alpha = \beta.$$
\end{proposition}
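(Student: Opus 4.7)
The plan is to prove the two implications separately. The $\Leftarrow$ direction is immediate, since $\mathcal{E}_T(\gamma)$ depends only on $T$ and the tuple $\gamma$. For the $\Rightarrow$ direction, the decisive observation -- already stressed in the paragraph preceding the proposition -- is that, whereas an oriented path carries two types ($\alpha$ and $-\overline{\alpha}$) because it can be read in two directions, an enumeration $E = v_1 v_2 \dots v_r$ comes equipped with a fixed reading order, so its standard type is uniquely determined by $E$ and the arc set of $T$.

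To make this uniqueness concrete, given any enumeration $E = v_1 \dots v_r$ of vertices of $T$, I would attach a sign sequence $\epsilon_i \in \{+1,-1\}$ for $1 \leq i \leq r-1$, with $\epsilon_i = +1$ iff $(v_i, v_{i+1}) \in E(T)$ and $\epsilon_i = -1$ otherwise. Partitioning $\epsilon_1, \dots, \epsilon_{r-1}$ into maximal runs of equal sign and recording the signed length of each run yields a canonical tuple $\gamma(E)$ with no zero components and $\gamma_j \gamma_{j+1} < 0$, that is, a standard tuple. By the very definition of the type of a path with respect to a given enumeration, any standard $\alpha$ such that $E \in \mathcal{E}_T(\alpha)$ must coincide with $\gamma(E)$. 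Applied to any $E$ in the common set $\mathcal{E}_T(\alpha) = \mathcal{E}_T(\beta)$, this forces $\alpha = \gamma(E) = \beta$, as required.

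The only real gap in the above argument -- and the step I expect to be the main obstacle -- is the edge case in which $\mathcal{E}_T(\alpha) = \mathcal{E}_T(\beta) = \emptyset$ while $\alpha \neq \beta$, since the uniqueness of $\gamma(E)$ then has no witness. To close this gap under the hypothesis $\sum_{i=1}^s |\alpha_i| + 1 \leq n$, one invokes the classical tournament path-embedding results of Havel and Thomason, which guarantee that every tournament of order $n$ contains an oriented path of every prescribed type on at most $n$ vertices; hence $\mathcal{E}_T(\alpha)$ is automatically non-empty. Alternatively, in each application of this proposition within the paper the relevant sets are already non-empty by construction, so the gap does not arise. In either setting the argument of the previous paragraph finishes the proof.
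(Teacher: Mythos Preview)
The paper does not actually prove this proposition; it merely states it as ``easily'' following from the remark that an enumeration, unlike a path, has only one standard type. Your construction of the canonical $\gamma(E)$ makes that remark precise and is exactly the intended argument.

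You are right to flag the empty case, but your proposed repair via path-embedding does not close it. First, the relevant result is due to Havet and Thomass\'e (not ``Havel and Thomason''), and second, it only applies for $n \geq 8$; for smaller tournaments it can fail. Concretely, take $T$ to be the directed $3$-cycle $a\to b\to c\to a$. Every vertex has in-degree and out-degree exactly $1$, so no enumeration $v_1v_2v_3$ can realise type $(1,-1)$ (which requires $v_2$ to have in-degree $2$ in the path) or type $(-1,1)$ (which requires $v_2$ to have out-degree $2$). Hence $\mathcal{E}_T(1,-1)=\mathcal{E}_T(-1,1)=\emptyset$ while $(1,-1)\neq(-1,1)$, so the biconditional is literally false here. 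The same phenomenon occurs for the regular tournaments on $5$ and $7$ vertices with the antidirected type.

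The honest resolution is therefore your ``alternative'': read the proposition with the tacit assumption that the sets are non-empty (equivalently, as the statement that distinct standard tuples index distinct equivalence classes in the partition of Proposition~\ref{PartEnum}). Under that reading your $\gamma(E)$ argument is complete and matches the paper's intent; the embedding theorem is neither needed nor sufficient.
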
 

\begin{proposition}\label{PartEnum}
Let $T$ be a tournament of order $n$.\\
The sets  $\mathcal{E}_T(\alpha)$, $\alpha=(\alpha_1,
\alpha_2,\dots,\alpha_s) \in \mathbb{Z}^s$, $\alpha_i.\alpha_{i+1} <0$, $s\geq 1$, with $\sum\limits_{i=1}^s \mid \alpha_i \mid =n-1$, form a partition of the set $\mathcal{E}_T$ of all the enumerations on $n$ vertices of $T$.
\end{proposition}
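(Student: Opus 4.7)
The plan is to reduce both properties required for a partition---every enumeration belongs to at least one set of the family, and any two distinct sets of the family are disjoint---to a single claim: each enumeration $E = v_1 v_2 \dots v_n$ of the vertices of $T$ has a unique admissible tuple $\alpha$ (i.e.\ with $\alpha_i \cdot \alpha_{i+1} < 0$ and $\sum_i |\alpha_i| = n-1$) such that $E$ is of type $E(\alpha)$. Once this is established, coverage says the tuple exists, and disjointness says it is unique.

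To prove coverage, I would build $\alpha$ canonically from $E$. For $1 \leq i \leq n-1$, set $\varepsilon_i = +1$ if $(v_i,v_{i+1}) \in E(T)$ and $\varepsilon_i = -1$ otherwise, and decompose the finite sequence $\varepsilon_1,\dots,\varepsilon_{n-1}$ into its maximal constant runs. If these runs have common signs $\sigma_1,\dots,\sigma_s$ and lengths $l_1,\dots,l_s$, define $\alpha := (\sigma_1 l_1,\ldots,\sigma_s l_s)$. Maximality of the runs forces $\sigma_j \neq \sigma_{j+1}$, hence $\alpha_j \cdot \alpha_{j+1} < 0$; and $\sum |\alpha_j| = \sum l_j = n-1$. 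Reading the blocks of $E$ directly from this construction shows that $E \in \mathcal{E}_T(\alpha)$.

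For disjointness, I would use the fact that the maximal-run decomposition above depends only on $E$, not on any prescribed type. Suppose $E \in \mathcal{E}_T(\alpha) \cap \mathcal{E}_T(\beta)$ with both tuples admissible. Because the admissibility condition $\alpha_i \cdot \alpha_{i+1} < 0$ already forces $\alpha$ to be standard (no zero entries, no two adjacent blocks of the same sign that could be merged), the block partition of $E$ induced by $\alpha$ must coincide with the canonical maximal-run decomposition; the same holds for $\beta$. Hence $\alpha = \beta$. Equivalently, one may invoke Proposition~\ref{SetEnumEqual} directly, since what that proposition really expresses is this uniqueness of the standard type of an enumeration.

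I do not anticipate a genuine obstacle: the argument is essentially the uniqueness of decomposition into maximal constant runs, and the condition $\alpha_i \cdot \alpha_{i+1} < 0$ is exactly what one needs to rule out the trivial redundancies (inserting a zero entry, or splitting a block in two). The only conceptual point worth highlighting in writing is the distinction emphasized earlier in the paper between \emph{paths}, which may carry two types because of the two possible traversals, and \emph{enumerations}, which fix the traversal and therefore carry exactly one standard type---which is precisely why the partition statement holds at the level of enumerations, and is the main reason for introducing $\mathcal{E}_T(\alpha)$ alongside $\mathcal{P}_T(\alpha)$.
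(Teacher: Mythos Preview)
Your proposal is correct. The paper's own proof is a one-sentence sketch: it asserts that the relation ``$E_1$ and $E_2$ lie in the same $\mathcal{E}_T(\alpha)$'' is an equivalence relation on $\mathcal{E}_T$ whose classes are precisely the sets $\mathcal{E}_T(\alpha)$. That formulation quietly presupposes exactly the two facts you spell out---that every enumeration lies in some $\mathcal{E}_T(\alpha)$ (reflexivity) and that distinct admissible $\alpha$'s give disjoint sets (well-definedness of the classes, i.e.\ Proposition~\ref{SetEnumEqual}). So your argument is not a different route but rather an unpacking of the paper's terse one: you make the coverage step explicit via the maximal-run decomposition of the sign sequence $(\varepsilon_i)$, and you make the disjointness step explicit by observing that the admissibility condition $\alpha_i\cdot\alpha_{i+1}<0$ forces $\alpha$ to coincide with that canonical decomposition. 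Your closing remark about why enumerations (unlike paths) carry a single standard type is also apt and matches the paper's motivation for introducing $\mathcal{E}_T(\alpha)$.
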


\begin{proof}
This proposition follows immediately from the fact that the binary relation $R_E$ defined on the set $\mathcal{E}_T$ by $E_1R_EE_2 \Leftrightarrow E_1$ and $E_2$ belong to the same set $\mathcal{E}_T(\alpha)$, is an equivalence relation on $\mathcal{E}_T$ whose equivalences classes are the sets $\mathcal{E}_T(\alpha)$, $\alpha=(\alpha_1,
\alpha_2,\dots,\alpha_s) \in \mathbb{Z}^s$, $\alpha_i.\alpha_{i+1} <0$, $s \geq 1$, and $\sum\limits_{i=1}^s \mid \alpha_i \mid =n-1$.
\end{proof}

Let $\alpha=(\alpha_1,\dots,\alpha_s)\in \mathbb{Z}^s$; $\alpha_i \cdot \alpha_{i+1} <0$, $\alpha_1 \geq 0$, such that $\sum\limits_{i=1}^s \mid \alpha_i \mid =n-1$. 
\begin{proposition}\label{PE}
If $\alpha$ is symmetric, then we have $\mid \mathcal{E}_T(\alpha)\mid =2.\mid \mathcal{P}_T(\alpha)\mid $, while $\mid \mathcal{E}_T(\alpha)\mid =\mid \mathcal{P}_T(\alpha) \mid $ otherwise.
\end{proposition}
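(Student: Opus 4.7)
The plan is to relate enumerations and paths through the natural forgetful map and then count fibers. Define $\Phi \colon \mathcal{E}_T(\alpha) \to \mathcal{P}_T(\alpha)$ by sending an enumeration $E = v_1 v_2 \dots v_m$ (with $m = \sum_{i=1}^s |\alpha_i|+1$) to the underlying oriented path $P = v_1 v_2 \dots v_m$, identified with its arc set. The map is well-defined because the definition of $E \in \mathcal{E}_T(\alpha)$ says precisely that the associated path is of type $P(\alpha)$. It is surjective because, as recalled in the paper, any path admits a (forward) enumeration realizing its type.

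The core step is to compute $|\Phi^{-1}(P)|$ for a fixed $P \in \mathcal{P}_T(\alpha)$. A path $P$ on $m$ vertices admits exactly two enumerations, namely $v_1 v_2 \dots v_m$ and its reverse $v_m v_{m-1} \dots v_1$, and these two enumerations are distinct as soon as $m \geq 2$, which holds here. Using the observation quoted in the preliminaries, if $P$ has type $P(\alpha)$ with respect to $v_1 \dots v_m$, then it has type $P(-\overline{\alpha})$ with respect to the reverse enumeration $v_m \dots v_1$. Hence the two enumerations of $P$ belong to $\mathcal{E}_T(\alpha)$ and $\mathcal{E}_T(-\overline{\alpha})$ respectively.

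Now split into cases. If $\alpha$ is symmetric, i.e.\ $\alpha = -\overline{\alpha}$, then both enumerations of $P$ lie in $\mathcal{E}_T(\alpha)$, so $|\Phi^{-1}(P)| = 2$ for every $P$, giving $|\mathcal{E}_T(\alpha)| = 2 |\mathcal{P}_T(\alpha)|$. If $\alpha$ is not symmetric, then $\alpha \neq -\overline{\alpha}$; by Proposition~\ref{SetEnumEqual} the sets $\mathcal{E}_T(\alpha)$ and $\mathcal{E}_T(-\overline{\alpha})$ are disjoint, so exactly one of the two enumerations of $P$ lies in $\mathcal{E}_T(\alpha)$, yielding $|\Phi^{-1}(P)| = 1$ and hence $|\mathcal{E}_T(\alpha)| = |\mathcal{P}_T(\alpha)|$.

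There is essentially no hard step here: the statement is a direct bookkeeping consequence of the fact that a path has two enumerations while an enumeration has a single type. The only point one must be a little careful about is invoking Proposition~\ref{SetEnumEqual} to guarantee disjointness of $\mathcal{E}_T(\alpha)$ and $\mathcal{E}_T(-\overline{\alpha})$ in the non-symmetric case, so as to rule out the possibility that both reverse enumerations are simultaneously counted in $\mathcal{E}_T(\alpha)$.
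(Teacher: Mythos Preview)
Your proof is correct and is essentially a detailed unpacking of the paper's one-sentence justification (``the automorphism group of an oriented path has either order one or two''): the automorphism group has order two precisely when $\alpha=-\overline{\alpha}$, which is exactly your fiber computation. One minor remark: to conclude disjointness of $\mathcal{E}_T(\alpha)$ and $\mathcal{E}_T(-\overline{\alpha})$ in the non-symmetric case, it is slightly cleaner to invoke the observation (stated just before Proposition~\ref{SetEnumEqual}) that an enumeration has a unique type, or equivalently Proposition~\ref{PartEnum}, rather than Proposition~\ref{SetEnumEqual} itself, which only asserts that the sets are unequal.
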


The above proposition can be easily verified. In fact, this result follows from the observation that the automorphism group of an oriented path has either order one or two.\\
\linebreak
We may now give the \textbf{Proof of Theorem \ref{f(a)=f(-a)}}:

\begin{proof} First of all remark that if $\alpha$ is symmetric, so is $-\alpha$ and vice versa. Thus to prove that $f_T(\alpha)=f_T(-\alpha)$, and using Proposition \ref{PE}, it is enough to prove that $e_T(\alpha) =e_T(-\alpha)$. \\
\linebreak
The proof will be done by induction on $s$.\\
\linebreak
If $s=1$, $\alpha=(\alpha_1)=(n-1)$ and $-\alpha=(-\alpha_1)=(1-n)$. Since every directed Hamiltonian path $P=v_1v_2\dots v_n$ in $T$ corresponds to two enumerations $E=v_1v_2\dots v_n$ and $E'=v_nv_{n-1}\dots v_1$ of types $E(\alpha)=E(n-1)$ and $E(-\alpha)=E(1-n)$ respectively, and vice versa, thus $\mid \mathcal{E}_T(\alpha)\mid =\mid \mathcal{E}_T(-\alpha)\mid = \mid \mathcal{P}_T(\alpha)\mid$ and we have $e_T(\alpha)=e_T(-\alpha)$.\\
Suppose that the result is true when $\alpha$ has $s$ components, i.e. if $\alpha=(\alpha_1,\dots,\alpha_s)\in \mathbb{Z}^s$; $\alpha_i \cdot \alpha_{i+1} <0$, $\alpha_1\geq 0$, and $T$ is a tournament of order $n=\sum\limits_{i=1}^s \mid \alpha_i \mid +1$, we have: $e_T(\alpha_1,\dots,\alpha_s)=e_T(-\alpha_1,\dots,-\alpha_s)$, and let's prove the result for $s+1$ components.\\ Let $\alpha=(\alpha_1,\dots,\alpha_s,\alpha_{s+1})\in \mathbb{Z}^{s+1}$; $\alpha_i \cdot \alpha_{i+1} <0$, $\alpha_1\geq 0$, and $T$ be a tournament of order $n=\sum\limits_{i=1}^{s+1} \mid \alpha_i \mid +1$.\\
\linebreak
We argue now by induction on $\alpha_1$. If $\alpha_1=0$, then by induction on $s$, we have $$e_T(0,\alpha_2,\dots,\alpha_{s+1})=e_T(\alpha_2,\dots,\alpha_{s+1})=e_T(-\alpha_2,\dots,-\alpha_{s+1})=e_T(0,-\alpha_2,\dots,-\alpha_{s+1}).$$
So suppose that $\alpha_1>0$, and that the result is true when the first component is equal to $\alpha_1 -1$, and let's prove it when the first component is equal to $\alpha_1$.\\
\linebreak
Let $X\subseteq V(T)$ such that $\mid X \mid= \alpha_1 $. Set $T'=T-X$, and let's define the following sets:\\
$A_X=\mathcal{E}_{\langle X\rangle}(\alpha_1 -1) \times\mathcal{E}_{T'}(\alpha_2,\dots,\alpha_{s+1})$,\\
$A'_X=\lbrace (E,E')\in A_X;$ $E=v_1\dots v_{\alpha_1}$, $E'=v_{\alpha_1+1}\dots v_n$, and $(v_{\alpha_1},v_{\alpha_1+1})\in E(T)\rbrace$,\\
$A''_X=\lbrace (E,E')\in A_X;$ $E=v_1\dots v_{\alpha_1}$, $E'=v_{\alpha_1+1}\dots v_n$, and $(v_{\alpha_1+1},v_{\alpha_1})\in E(T)\rbrace$.\\
Obviously we have: $A'_X\cap A''_X=\emptyset$, and $A_X=A'_X\cup A''_X$, thus $\mid A_X\mid=\mid A'_X\mid + \mid A''_X\mid$.\\
\linebreak
Consider the two sets $$\mathcal{E}_X(\alpha_1,\dots,\alpha_{s+1})=\lbrace E=v_1\dots v_{\alpha_1} v_{\alpha_1+1}\dots v_n \in \mathcal{E}_T(\alpha_1,\dots,\alpha_{s+1}); \ \lbrace v_1,\dots,v_{\alpha_1} \rbrace = X \rbrace,$$
\begin{eqnarray*}
\mathcal{E}_X(\alpha_1-1,\alpha_2-1,\alpha_3\dots,\alpha_{s+1})= && \lbrace E=v_1\dots v_{\alpha_1} v_{\alpha_1+1} \dots v_n \in \mathcal{E}_T(\alpha_1-1,\alpha_2-1,\alpha_3,\dots,\alpha_{s+1}); \\
&& \lbrace v_1,\dots,v_{\alpha_1} \rbrace = X \rbrace.
\end{eqnarray*}
We have $$\mid A'_X \mid=\mid  \mathcal{E}_X(\alpha_1,\dots,\alpha_{s+1})\mid \ \text{and} \ \mid A''_X\mid=\mid \mathcal{E}_X(\alpha_1-1,\alpha_2-1,\alpha_3,\dots,\alpha_{s+1})\mid.$$
In fact, consider the two correspondences $f:$ $\mathcal{E}_X(\alpha_1,\dots,\alpha_{s+1})$ $\longrightarrow$ $A'_X$ such that $\forall$ $E=v_1\dots v_{\alpha_1} v_{\alpha_1+1} \dots v_n \in \mathcal{E}_X(\alpha_1,\dots,\alpha_{s+1})$, $f(E)=(v_1\dots v_{\alpha_1},v_{\alpha_1+1}\dots v_n)$, and $f':$ $\mathcal{E}_X(\alpha_1-1,\alpha_2-1,\alpha_3,\dots,\alpha_{s+1})$ $\longrightarrow$ $A''_X$ such that $\forall$ $E=v_1\dots v_{\alpha_1} v_{\alpha_1+1} \dots v_n \in \mathcal{E}_X(\alpha_1-1,\alpha_2-1,\alpha_3,\dots,\alpha_{s+1})$, $f'(E)=(v_1\dots v_{\alpha_1},v_{\alpha_1+1}\dots v_n)$. We can verify that both are bijective mappings.\\ 
Hence, $$\mid A_X\mid =\mid \mathcal{E}_X(\alpha_1,\dots,\alpha_{s+1})\mid + \mid \mathcal{E}_X(\alpha_1-1,\alpha_2-1,\alpha_3,\dots,\alpha_{s+1})\mid.$$\\
\medskip
Now let's consider $-\alpha=(-\alpha_1,\dots,-\alpha_s)$, and let $X\subseteq V(T)$ such that $\mid X \mid= \alpha_1 $. Set $T'=T-X$, and let's also define the following sets:\\
$B_X=\mathcal{E}_{\langle X\rangle}(-\alpha_1 +1) \times\mathcal{E}_{T'}(-\alpha_2,\dots,-\alpha_{s+1})$,\\
$B'_X=\lbrace (E,E')\in B_X;$ $E=v_1\dots v_{\alpha_1}$, $E'=v_{\alpha_1+1}\dots v_n$, and $(v_{\alpha_1+1},v_{\alpha_1})\in E(T)\rbrace$,\\
$B''_X=\lbrace (E,E')\in B_X;$ $E=v_1\dots v_{\alpha_1}$, $E'=v_{\alpha_1+1}\dots v_n$, and $(v_{\alpha_1},v_{\alpha_1+1})\in E(T)\rbrace$.\\
We also have: $B'_X\cap B''_X=\emptyset$, and $B_X=B'_X\cup B''_X$, thus $\mid B_X\mid=\mid B'_X\mid + \mid B''_X\mid$.\\
\linebreak
Consider the two sets $$\mathcal{E}_X(-\alpha_1,\dots,-\alpha_{s+1})=\lbrace E=v_1\dots v_{\alpha_1} v_{\alpha_1+1} \dots v_n \in \mathcal{E}_T(-\alpha_1,\dots,-\alpha_{s+1}); \ \lbrace v_1,\dots,v_{\alpha_1} \rbrace = X \rbrace,$$ 
$
\mathcal{E}_X(-\alpha_1+1,-\alpha_2+1,-\alpha_3,\dots,-\alpha_{s+1})= \\
\lbrace E=v_1\dots v_{\alpha_1} v_{\alpha_1+1} \dots v_n \in \mathcal{E}_T(-\alpha_1+1,-\alpha_2+1,-\alpha_3,\dots,-\alpha_{s+1}); \  \lbrace v_1,\dots,v_{\alpha_1} \rbrace = X \rbrace.
$ \\
\linebreak
Similarly as before, we can prove that $$\mid B'_X \mid=\mid  \mathcal{E}_X(-\alpha_1,\dots,-\alpha_{s+1})\mid \ \text{and} \ \mid B''_X\mid=\mid \mathcal{E}_X(-\alpha_1+1,-\alpha_2+1,-\alpha_3,\dots,-\alpha_{s+1})\mid,$$
Hence $$\mid B_X\mid =\mid \mathcal{E}_X(-\alpha_1,\dots,-\alpha_{s+1})\mid + \mid \mathcal{E}_X(-\alpha_1+1,-\alpha_2+1,-\alpha_3,\dots,-\alpha_{s+1})\mid.$$
On the other hand, we have that $$\mid A_X \mid = e_X(\alpha_1-1).e_{T'}(\alpha_2,\dots,\alpha_{s+1}),$$ $$\mid B_X \mid =  e_X(-\alpha_1+1).e_{T'}(-\alpha_2,\dots,-\alpha_{s+1}),$$ but since we have here less than $s+1$ blocks, thus by induction, $ e_X(\alpha_1-1)= e_X(-\alpha_1+1)$ and $e_{T'}(\alpha_2,\dots,\alpha_{s+1})=e_{T'}(-\alpha_2,\dots,-\alpha_{s+1})$, thus $$\mid A_X \mid = \mid B_X \mid.$$
Also, $\forall$ $\beta =(\alpha_1,\dots,\alpha_{s+1})$ or $(\alpha_1-1,\alpha_2-1,\alpha_3,\dots,\alpha_{s+1})$ or $(-\alpha_1,\dots,-\alpha_{s+1})$ or $(-\alpha_1+1,-\alpha_2+1,-\alpha_3,\dots,-\alpha_{s+1})$, we have $\mathcal{E}_T(\beta)=\cup_{X\subseteq V(T);\mid X \mid = \alpha_1}\mathcal{E}_X(\beta)$. (The union is disjoint since if $X\neq X'$, the enumerations differ).\\
\linebreak
Since $\mid A_X \mid = \mid B_X \mid$, then $\mid \mathcal{E}_X(\alpha_1,\dots,\alpha_{s+1})\mid + \mid \mathcal{E}_X(\alpha_1-1,\alpha_2-1,\alpha_3,\dots,\alpha_{s+1})\mid$ $=\mid \mathcal{E}_X(-\alpha_1,\dots,-\alpha_{s+1})\mid + \mid \mathcal{E}_X(-\alpha_1+1,-\alpha_2+1,-\alpha_3,\dots,-\alpha_{s+1})\mid$. \\Doing the summation over all the sets $X\subseteq V(T)$, $\mid X \mid = \alpha_1$,  we have:\\
$$ \sum\limits_{X\subseteq V(T);\mid X \mid = \alpha_1} \mid \mathcal{E}_X(\alpha_1,\dots,\alpha_{s+1})\mid +\sum\limits_{X\subseteq V(T);\mid X \mid = \alpha_1}\mid \mathcal{E}_X(\alpha_1-1,\alpha_2-1,\alpha_3,\dots,\alpha_{s+1})\mid \ =$$ $$\sum\limits_{X\subseteq V(T);\mid X \mid = \alpha_1}\mid \mathcal{E}_X(-\alpha_1,\dots,-\alpha_{s+1})\mid+\sum\limits_{X\subseteq V(T);\mid X \mid = \alpha_1}\mid \mathcal{E}_X(-\alpha_1+1,-\alpha_2+1,-\alpha_3,\dots,-\alpha_{s+1})\mid,$$thus,
$$\mid \mathcal{E}_T(\alpha_1,\dots,\alpha_{s+1})\mid + \mid \mathcal{E}_T(\alpha_1-1,\alpha_2-1,\alpha_3,\dots,\alpha_{s+1})\mid$$ $$=\mid \mathcal{E}_T(-\alpha_1,\dots,-\alpha_{s+1})\mid + \mid \mathcal{E}_T(-\alpha_1+1,-\alpha_2+1,-\alpha_3,\dots,-\alpha_{s+1})\mid,$$
which implies that $$e_{T}(\alpha_1,\dots,\alpha_{s+1})+e_{T}(\alpha_1-1,\alpha_2-1,\alpha_3, \dots,\alpha_{s+1})$$ $$=e_{T}(-\alpha_1,\dots,-\alpha_{s+1})+e_{T}(-\alpha_1+1,-\alpha_2+1,-\alpha_3,\dots,-\alpha_{s+1}).$$
But by induction, since $\alpha_1-1<\alpha_1$, we have that $e_{T}(\alpha_1-1,\alpha_2-1,\alpha_3, \dots,\alpha_{s+1})=e_{T}(-\alpha_1+1,-\alpha_2+1,-\alpha_3,\dots,-\alpha_{s+1})$. So we finally get $$e_{T}(\alpha_1,\dots,\alpha_{s+1})=e_{T}(-\alpha_1,\dots,-\alpha_{s+1}),$$
which concludes the proof.
\end{proof}

\section{Oriented cycles and generating paths}

Let $T$ be a tournament. In this section we find a relation between $f_T(\alpha)$, $g_T(\beta)$ and $g_T(\beta')$, where $P(\alpha)$ is some type of oriented Hamiltonian paths in $T$, and $C(\beta)$ and $C(\beta')$ are the two types of cycles that can be generated by a path of type $P(\alpha)$ in the tournament $T$, (see Remark \ref{generatedcycles}), and this result will be of great use in the next section.\\
\linebreak
We first start by proving the following theorem:
\begin{theorem}\label{EquClass}
Let $P\in \mathcal{P}_T(\alpha)$ be an oriented Hamiltonian path in a tournament $T$ and let $C_P$ be the cycle generated by $P$ in $T$, of type $C(\beta)$, such that $C_P$ has at least 2 blocks (i.e. $C_P$ is not a circuit). Then if $C_P$ is non-symmetric, we have $\mid\! \overline{P} \!\mid = t(\beta)$, while if $C_P$ is symmetric, then $\mid\! \overline{P} \!\mid = 2.t(\beta)$, where $\overline{P}$ is the equivalence class of $P$ with respect to $\mathcal{R}_{\alpha}$.
\end{theorem}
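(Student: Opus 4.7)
The plan is to count $|\overline{P}|$ by counting enumerations of $V(C_P)$ of type $\alpha$ obtained by reading $C_P$ in one of its two cyclic directions. Each path in $\overline{P}$ corresponds bijectively to an edge $e\in E(C_P)$ with $C_P-e$ of type $\alpha$, and admits two vertex enumerations whose types are $\alpha$ and $-\overline{\alpha}$ (coinciding iff $\alpha$ is symmetric). Reasoning as in Proposition \ref{PE}, the number $N_\alpha$ of type-$\alpha$ enumerations arising from $C_P$ satisfies
\[
N_\alpha = \begin{cases} 2|\overline{P}| & \text{if $\alpha$ is symmetric,} \\ |\overline{P}| & \text{otherwise.} \end{cases}
\]

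Next I would fix one cyclic orientation of $C_P$ (call it direction $+$) and decompose $N_\alpha = N_\alpha^+ + N_\alpha^-$ by reading direction. Writing the cyclic arc-sign sequence of $C_P$ as $b_1\dots b_n$, the direction-$+$ enumeration starting at vertex $w_k$ reads the linear sequence $b_k b_{k+1}\dots b_{k-2}$ (indices mod $n$). Since run-length encoding is injective, two such enumerations yield the same type iff the corresponding linear sequences are equal, which by a short coset argument on $\mathbb{Z}/n$ forces the difference of starting positions to be a period of the cyclic sequence. The minimal such period is $n/t(\beta)$, inherited from $r(\beta)$, so the starting positions giving type $\alpha$ (if any) form a single coset of size $t(\beta)$. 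The given enumeration of $P$ witnesses $N_\alpha^+ = t(\beta)$; the same argument yields $N_\alpha^- \in \{0, t(\beta)\}$.

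The crux is determining $N_\alpha^-$. If $\alpha$ is symmetric, reversing the enumeration of $P$ produces a direction-$-$ enumeration of type $-\overline{\alpha}=\alpha$, giving $N_\alpha^-=t(\beta)$. If $C_P$ is symmetric, its reflection automorphism (which preserves arc incidences and reverses cyclic direction) sends $P$'s enumeration to a direction-$-$ enumeration of the same type $\alpha$, again yielding $N_\alpha^-=t(\beta)$. Conversely, suppose both $\alpha$ and $C_P$ are non-symmetric but $N_\alpha^->0$; then reversing a direction-$-$ enumeration of type $\alpha$ gives a direction-$+$ enumeration of type $-\overline{\alpha}\neq\alpha$. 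Having two distinct direction-$+$ types $\alpha$ and $-\overline{\alpha}$ among the shifts of $b_1\dots b_n$ can be unwound to show the cyclic arc sequence is fixed by a reverse-negate involution, making $C_P$ symmetric --- a contradiction. So $N_\alpha^-=0$ in that remaining case.

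Combining the pieces: in the doubly non-symmetric case $N_\alpha=t(\beta)$ with multiplier $1$, giving $|\overline{P}|=t(\beta)$; in the case $C_P$ non-symmetric and $\alpha$ symmetric, $N_\alpha=2t(\beta)$ with multiplier $2$, again $|\overline{P}|=t(\beta)$; and when $C_P$ is symmetric, Remark \ref{zzzzzzzz} forces $\alpha$ non-symmetric, yielding $N_\alpha=2t(\beta)$ with multiplier $1$ and $|\overline{P}|=2t(\beta)$, as required. The main obstacle will be the converse implication in the third paragraph---establishing that simultaneous presence of $\alpha$ and $-\overline{\alpha}$ among direction-$+$ types of $C_P$ forces $C_P$ to be symmetric---which reduces to verifying that a pointwise relation $b_j=-b_{c-j}$ deduced for all $j\neq k-1$ (for the appropriate constants $c, k$ extracted from the two enumerations) extends automatically to $j=k-1$ via the involution $j\mapsto c-j$.
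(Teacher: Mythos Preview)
Your proposal is correct and follows the same broad strategy as the paper---split the count according to the two cyclic reading directions of $C_P$, show each direction contributes either $0$ or $t(\beta)$, and decide which by the symmetry of $C_P$---but your execution differs from the paper's in two notable ways.

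First, the paper works directly with \emph{paths}: it defines $\mathcal{A}$ (paths generating $C$ whose forward enumeration has type $\alpha$) and $\mathcal{B}$ (same for the reverse enumeration), invokes Lemma~\ref{clonetype} to get $|\mathcal{A}|=t(\beta)$, Lemma~\ref{betaSym} to decide whether $\mathcal{B}$ is empty, and Lemma~\ref{intersection} (using Remark~\ref{zzzzzzzz}) to show $\mathcal{A}\cap\mathcal{B}=\emptyset$ in the symmetric-cycle case. You instead count \emph{enumerations} and absorb the path/enumeration discrepancy into a multiplier \`a la Proposition~\ref{PE}. This is arguably cleaner: it makes the $\alpha$-symmetric subcase completely explicit, whereas in the paper that subcase is hidden inside the appeal to Lemma~\ref{betaSym} (and indeed when $\alpha$ is symmetric one actually has $\mathcal{A}=\mathcal{B}$ rather than $\mathcal{B}=\emptyset$, though the final count $|\overline{P}|=|\mathcal{A}|=t(\beta)$ is unaffected).

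Second, your periodicity and involution arguments on the arc-sign sequence $b_1\dots b_n$ replace the block-level Lemmas~\ref{betaSym}--\ref{clonetype} by a single direct computation. For the ``obstacle'' you flag: once you have $b_j=-b_{c-j}$ for all $j\neq j_0$ (with $j_0$ the index of the arc deleted in $P$), the extension to $j=j_0$ goes through precisely when $j_0$ is not a fixed point of $j\mapsto c-j$; and a fixed point there would force the two enumerations to be reverses of each other, i.e.\ $\alpha=-\overline{\alpha}$, contradicting the non-symmetry of $\alpha$ you assumed. So your sketch closes, and the resulting reflect-and-negate symmetry of the full sign sequence does give a block-boundary starting point (since $b_{a-1}=-b_a$ at the reflection centre), hence a symmetric type for $C_P$. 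This is exactly the content of the sufficient direction of Lemma~\ref{betaSym}, just argued at the arc level rather than the block level.
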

In \cite{ElSahili-AA2}, one actually proved that if $C_P$ is non-symmetric, then $\mid\! \overline{P} \!\mid = t(\beta)$. In the following, we will present arguments useful for both the symmetric and the non-symmetric types.
\begin{remark}\label{EquClass1.5}
If $P\in \mathcal{P}_T(\alpha)$ is an oriented Hamiltonian path in a tournament $T$ of order $n$, and $C_P$ the cycle generated by $P$ in $T$, such that $C_P$ is a Hamiltonian circuit,  then $\mid\! \overline{P} \!\mid = n$, where $\overline{P}$ is the equivalence class of $P$ with respect to $\mathcal{R}_{\alpha}$.
\end{remark}
In fact, since $C_P$ is a circuit, then $P$ must be a directed path, also every Hamiltonian circuit is generated by exactly $n$ directed Hamiltonian paths, starting each from a vertex of $C_P$.
\par Note that if $C_P$ is a circuit, say of type $C(\beta)$, ($\beta$ in this case is a \underline{singleton}, that is has one component), then $t(\beta)=1$.\\
\linebreak
In order to prove Theorem \ref{EquClass}, we first give the three following lemmas:
\begin{lemma}\label{betaSym}
Let $T$ be a tournament of order $n$, and let $C=v_1v_2\dots v_n$ be a Hamiltonian cycle in $T$. Then $C$ is symmetric if and only if $\forall$ $1\leq i\leq n$, and for every Hamiltonian path $P=v_iv_{i+1}\dots v_nv_1\dots v_{i-1}$, there exists $1\leq i'\leq n$ such that $P=v_iv_{i+1}\dots v_nv_1\dots v_{i-1}$ and $P'=v_{i'}v_{i'-1}\dots v_1v_nv_{n-1}\dots v_{i'+1}$ have the same type with respect to these enumerations.
\end{lemma}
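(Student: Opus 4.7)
The plan is to encode the cyclically ordered cycle $C$ by its arc-sign function $\epsilon : \mathbb{Z}/n \to \{\pm 1\}$, with $\epsilon_k = +1$ iff $(v_k,v_{k+1}) \in E(T)$, so that the cycle's type $C(\beta)$ is recovered by grouping $\epsilon_1,\epsilon_2,\dots,\epsilon_n$ into maximal constant-sign runs. With respect to its given enumeration, the forward path $P = v_i v_{i+1}\dots v_{i-1}$ has arc-sign sequence $(\epsilon_i,\epsilon_{i+1},\dots,\epsilon_{i-2})$ of length $n-1$, while the backward path $P' = v_{i'} v_{i'-1}\dots v_{i'+1}$ has arc-sign sequence $(-\epsilon_{i'-1},-\epsilon_{i'-2},\dots,-\epsilon_{i'+1})$ of the same length. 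Since the type of a path of fixed length is determined by, and determines, its sign sequence, $P$ and $P'$ have the same type if and only if these two length-$(n-1)$ sequences coincide termwise, equivalently
\[
\epsilon_m \;=\; -\epsilon_{c-m} \qquad \text{for every } m\in \mathbb{Z}/n \setminus \{i-1\},
\]
where $c := i + i' - 1 \pmod n$.

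For the direction ($\Rightarrow$), assume $C$ is symmetric: by definition some cyclic rotation of $\beta$ equals $-\overline{\beta}$, which lifts on the sign sequence $\epsilon$ to the existence of some $c_0\in\mathbb{Z}/n$ with $\epsilon_m = -\epsilon_{c_0-m}$ for \emph{every} $m$. Given any $i$, set $i' := c_0 - i + 1 \pmod n$; then $c = c_0$ and the displayed identity holds at every $m$, so $P$ and $P'$ have the same type.

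For ($\Leftarrow$), fix any $i$ with its promised $i'$ and let $c = i+i'-1$. If $i'\not\equiv i-1 \pmod n$, then $m = i'$ is a legitimate index and yields $\epsilon_{i'} = -\epsilon_{c-i'} = -\epsilon_{i-1}$, which is precisely the identity missing at $m=i-1$; hence $\epsilon_m = -\epsilon_{c-m}$ for all $m$, and regrouping into blocks yields a symmetric type of $C$. If instead $i' \equiv i-1$, then $P'_{i'}$ is just $P$ read from the other end, and type-equality forces the type $\alpha$ of $P$ to satisfy $\alpha = -\overline{\alpha}$; a parity check (the middle sign of a length-$(n-1)$ antisymmetric $\pm 1$ sequence would equal its own negation) then forces $n$ odd. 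Now apply the hypothesis to $i+1$: if its matching index $i''\neq i$, the first case applies and we are done; otherwise both $P_i$ and $P_{i+1}$ are path-symmetric, and combining the two resulting reflective relations on $\epsilon$ gives $\epsilon_{m+2} = \epsilon_m$ for all $m$, i.e., $\epsilon$ is $2$-periodic modulo $n$. Since $n$ is odd, this forces $\epsilon$ to be constant, hence $C$ a directed Hamiltonian circuit; but a circuit fails the hypothesis (forward Hamiltonian paths are all directed one way, backward ones the opposite way, and no types ever match), a contradiction. Thus the trivial match $i'=i-1$ cannot persist for every $i$, and the first case delivers the desired symmetry of $C$.

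The main obstacle is the converse direction, and specifically the degenerate sub-case $i'=i-1$, which needs the auxiliary argument above to be ruled out globally. The heart of the lemma is the clean translation between cycle-type symmetry ($\beta = -\overline{\beta}$ up to cyclic rotation) and the pointwise anti-reflection $\epsilon_m + \epsilon_{c-m} = 0$ on the arc-sign sequence; once that dictionary is established, the rest is index bookkeeping.
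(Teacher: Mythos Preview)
Your sign-sequence encoding is a genuinely cleaner framework than the paper's block-by-block comparison, and the forward direction as well as the non-degenerate sub-case of the converse are handled correctly. However, the degenerate sub-case contains a real gap.

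You claim that if both $P_i$ and $P_{i+1}$ are path-symmetric, then combining the two anti-reflections $\epsilon_m=-\epsilon_{2i-2-m}$ (valid for $m\neq i-1$) and $\epsilon_m=-\epsilon_{2i-m}$ (valid for $m\neq i$) yields $\epsilon_{m+2}=\epsilon_m$ for \emph{all} $m$. This is false: composing the two reflections gives $\epsilon_{m}=\epsilon_{m+2}$ only for $m\notin\{i-2,i-1\}$, and at those two indices the relation genuinely fails. A concrete counterexample is $n=5$ with $\epsilon=(+,-,+,-,+)$: here both $P_1$ and $P_2$ have symmetric type $(\pm 1,\mp 1,\pm 1,\mp 1)$, yet $\epsilon_4=-\neq+=\epsilon_1=\epsilon_6$, so $\epsilon$ is not $2$-periodic and $C$ is not a circuit. (This cycle has type $C(2,-1,1,-1)$, which is not symmetric, and indeed the lemma's hypothesis fails at $i=3$.)

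The fix is short. Once you are in the degenerate case at a single $i$, you have already forced $n$ odd; but for $n$ odd your non-degenerate argument would produce $\epsilon_m=-\epsilon_{c-m}$ for all $m$, which is impossible since $2m=c$ has a solution. Hence for $n$ odd \emph{every} $i$ must be degenerate, so every $P_j$ is path-symmetric. Now use all of these relations simultaneously: for fixed $m$ and any $j\neq m+1$ one has $\epsilon_m=-\epsilon_{2j-2-m}$, and as $j$ ranges over $\mathbb{Z}/n\setminus\{m+1\}$ the index $2j-2-m$ ranges over $\mathbb{Z}/n\setminus\{m\}$ (here $n$ odd makes $j\mapsto 2j-2-m$ a bijection). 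Thus $\epsilon_m=-\epsilon_{m'}$ for all $m'\neq m$, which for $n\geq 3$ is impossible. This contradiction shows the hypothesis cannot hold when $n$ is odd, and your non-degenerate case already handles $n$ even, completing the converse. The paper, by contrast, equates the block-tuples of $P$ and $P'$ directly and extracts two symmetric sub-tuples of $\beta$; your $\epsilon$-approach avoids that block bookkeeping but needs the global (all-$i$) argument in the odd case rather than just two consecutive indices.
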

\begin{proof}
For the necessary condition, since $C$ is symmetric, we can suppose without loss of generality that $C=v_1v_2\dots v_n$ is of type $C(\beta_1,\beta_2,\dots,\beta_s)$ $=I_1I_2\dots I_s$ with respect to this enumeration, where $\beta$ is symmetric. We have $\mid I_j \mid = \mid \beta_j \mid$, and let $end(I_j)=\lbrace x_j,y_j\rbrace$, $\forall$ $1\leq j \leq s$.\\ Suppose that $v_i \in I_j$, for some $1\leq j \leq s$, and suppose without loss of generality that $\beta_j >0$, (the case $\beta_j <0$ is similar).\\
Let $i'=n-(i-2)$, (assuming that if $i=1$, $i'=n+1$ simply denotes $i'=1$), so $P'=v_{n-(i-2)} v_{n-(i-2)-1} \dots v_1v_n \dots v_{n-(i-2)+1}$. We will show that this value of $i'$ satisfies the necessary condition. \\
In fact, let $x=l(I_{j}\left[ x_j,v_i \right])$, then the path $P=v_iv_{i+1}\dots v_nv_1\dots v_{i-1}$ is of type $P(\beta_j-x,\beta_{j+1},\dots, \beta_s,\beta_1,\dots, \beta_{j-1},x-1)$ with respect to this enumeration. \\
Since $\beta$ is symmetric, then $(\beta_1,\beta_2,\dots,
\beta_j) = (-\beta_s,-\beta_{s-1},\dots,-\beta_{s-(j-1)})$, so $|\beta_i|=|\beta_{s-i+1}|$ $\forall$ $1\leq i \leq j$, and since $l(C \left[ v_1v_2\dots v_i\right]) = l(C \left[ v_1v_nv_{n-1}\dots v_{n-(i-2)}\right])$, we deduce that $v_{n-(i-2)}\in I_{s-(j-1)}$ and $l(I_{j}\left[ x_j,v_i \right])=$ $l(I_{s-(j-1)}\left[ y_{s-(j-1)},v_{n-(i-2)} \right])=x$. \\
As a result, the path $P'=v_{n-(i-2)} v_{n-(i-2)-1} \dots v_1v_n \dots v_{n-(i-2)+1}$ is of type $P(-\beta_{s-(j-1)}-x, -\beta_{s-(j-1)-1},\dots,-\beta_1,-\beta_s,\dots,-\beta_{s-(j-1)+1},x-1)$ with respect to this enumeration. \\
But $(\beta_1,\beta_2,\dots,\beta_s)=(-\beta_s,-\beta_{s-1},\dots,-\beta_1)$ (since $\beta$ is symmetric), so we get that $P'=v_{n-(i-2)} v_{n-(i-2)-1} \dots v_1v_n \dots v_{n-(i-2)+1}$ is of type $P(\beta_j-x,\beta_{j+1},\dots, \beta_s,\beta_1,\dots, \beta_{j-1},x-1)$ with respect to this enumeration.\\
\linebreak
For the sufficient condition, suppose to the contrary that $C$ is non-symmetric but $\forall$ $ 1\leq i\leq n$, and for every Hamiltonian path $P=v_iv_{i+1}\dots v_nv_1\dots v_{i-1}$, there exists some $i'$, $1\leq i'\leq n$, such that $P=v_iv_{i+1}\dots v_nv_1\dots v_{i-1}$ and $P'=v_{i'}v_{i'-1}\dots v_1v_nv_{n-1}\dots v_{i'}$ have the same type with respect to these enumerations. \\
Suppose without loss of generality that $v_i\in I_1$, and that $\beta_1>0$. (The case $\beta_1<0$ is similarly treated). The path $P=v_iv_{i+1}\dots v_nv_1\dots v_{i-1}$ is of type $P(\beta_1-x,\beta_2,\dots,\beta_s,x-1)$ with respect to this enumeration, for some $0\leq x \leq \beta_1$. Thus the path $P'=v_{i'}v_{i'-1}\dots v_1v_n\dots v_{i'+1}$ has the type $P(\beta_1-x,\beta_2,\dots,\beta_s,x-1)$ with respect to this enumeration.
But the vertex $v_{i'}$ belongs to $C$, thus $v_{i'}$ belongs to a block $I_j$ of $C$ of length $\mid \beta_j \mid $, then $P'=v_{i'}v_{i'-1}\dots v_1v_n\dots v_{i'+1}$ is of type $P(-\beta_j-y,-\beta_{j-1},\dots,-\beta_1,-\beta_s,\dots,-\beta_{j+1},y-1)$ with respect to this enumeration, where $-\beta_j>0$ in this case (since we should have $\beta_1-x=-\beta_j-y$ and $\beta_1-x>0$), and $0\leq y \leq -\beta_j$. We get $(\beta_1-x,\beta_2,\dots,\beta_s,x-1)=(-\beta_j-y,-\beta_{j-1},\dots,-\beta_1,-\beta_s,\dots,-\beta_{j+1},y-1)$, thus $x-1=y-1$ so $x=y$.\\
As a result, we have: $$(\beta_1-x,\beta_2,\dots,\beta_j,\beta_{j+1},\dots,\beta_s,x-1) \overset{\mathrm{(*)}}{=} (-\beta_j-x,-\beta_{j-1},\dots,-\beta_1,-\beta_s,\dots,-\beta_{j+1},x-1).$$
Property ($*$) implies that $\beta_1-x=-\beta_j-x$ and $\forall$ $2\leq p \leq j$, $\beta_p=-\beta_{j-(p-1)}$, thus $\forall$ $1\leq p \leq j$, $\beta_p=-\beta_{j-(p-1)}$, that is $\beta'=(\beta_1,\beta_2,\dots,\beta_j)=(-\beta_j,-\beta_{j-1},\dots,-\beta_1)$, hence $\beta'$ is symmetric, and we can write $\beta'$ as $(\beta_1,\beta_2,\dots,\beta_{\frac{j}{2}},-\beta_{\frac{j}{2}},\dots,-\beta_2,-\beta_1)$. \\
Also, property ($*$) implies that $\forall$ $1\leq p' \leq s-j$, $\beta_{j+p'}=-\beta_{s-(p'-1)}$, that is $\beta''=(\beta_{j+1},\dots,\beta_{s-1},\beta_s)=(-\beta_s,-\beta_{s-1},\dots,-\beta_{j+1})$, which means that $\beta''$ is symmetric, and we can write $\beta''$ as $(-\beta_s,-\beta_{s-1},\dots,-\beta_{\frac{s-j}{2}},\beta_{\frac{s-j}{2}},\dots,\beta_{s-1},\beta_s)$.\\
So finally we have: 
\begin{eqnarray*}
(\beta_1,\beta_2,\dots,\beta_j,\beta_{j+1},\dots,\beta_s)=&&(\beta_1,\beta_2,\dots,\beta_{\frac{j}{2}},-\beta_{\frac{j}{2}},\dots,-\beta_2,-\beta_1,-\beta_s,-\beta_{s-1}, \\
&& \dots, -\beta_{\frac{s-j}{2}},\beta_{\frac{s-j}{2}},\dots,\beta_{s-1},\beta_s),
\end{eqnarray*}
which is the type of the cycle $C$.\\
Now, if we consider $$\beta^*=(-\beta_{\frac{j}{2}},\dots,-\beta_2,-\beta_1,-\beta_s,-\beta_{s-1},\dots,-\beta_{\frac{s-j}{2}},\beta_{\frac{s-j}{2}},\dots,\beta_{s-1},\beta_s,\beta_1,\beta_2,\dots,\beta_{\frac{j}{2}}),$$
$\beta^*$ is symmetric, and is also a type of the cycle $C$, thus $C$ is symmetric, which leads to a contradiction since $C$ is non-symmetric.
\end{proof}

Now, let $T$ be a tournament of order $n$, and let $C$ be a Hamiltonian cycle in $T$, such that $C=v_1v_2\dots v_n$ is of type $C(\beta)$ with respect to this enumeration, where
$\beta$ is symmetric. We now know by the proof of the necessary condition of Lemma \ref{betaSym} that $\forall$ $1\leq i\leq n$, the Hamiltonian paths $P=v_iv_{i+1}\dots v_nv_1\dots v_{i-1}$ and $P'=v_{n-(i-2)} v_{n-(i-2)-1} \dots v_1v_n \dots v_{n-(i-2)+1}$ have the same type with respect to these enumerations.
\par So let $\mathcal{A}$ be the set of all the paths in $T$ that generate the cycle $C$ and that have the form $v_iv_{i+1}\dots v_nv_1\dots v_{i-1}$ and are of a certain type $P(\alpha)$ with respect to this enumeration, and let $\mathcal{B}$ be the set of all the paths that generate $C$ and that have the form $v_{n-(i-2)} v_{n-(i-2)-1} \dots v_1v_n \dots v_{n-(i-2)+1}$ and that also have the type $P(\alpha)$ with respect to this enumeration.
\begin{lemma}\label{intersection}
We have $\mathcal{A}\cap \mathcal{B}=\emptyset$, and $|\mathcal{A}| =|\mathcal{B}|$.
\end{lemma}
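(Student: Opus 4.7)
The plan is to prove the two assertions separately, each time exploiting the symmetry of $C$.

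For the disjointness $\mathcal{A}\cap\mathcal{B}=\emptyset$, I would first invoke Remark~\ref{zzzzzzzz}: since $C$ is a symmetric Hamiltonian cycle, every Hamiltonian path generating $C$ has a \emph{non-symmetric} type, so $\alpha\neq-\overline{\alpha}$. Now suppose for contradiction that some path $P$ lies in $\mathcal{A}\cap\mathcal{B}$. Its membership in $\mathcal{A}$ provides an enumeration $v_iv_{i+1}\dots v_{i-1}$ of $P$ of type $P(\alpha)$, and its membership in $\mathcal{B}$ provides a second enumeration $v_{n-(i'-2)}v_{n-(i'-2)-1}\dots v_{n-(i'-2)+1}$ of the same path $P$, also of type $P(\alpha)$. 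The first enumeration traverses $C$ in the forward direction while the second traverses $C$ backwards; and because a path admits only the two enumerations which are reverses of one another, these two enumerations of $P$ must be reverses of each other. But as noted in Section~2, the types of an enumeration and its reversal are $P(\alpha)$ and $P(-\overline{\alpha})$ respectively, so equating them forces $\alpha=-\overline{\alpha}$, contradicting non-symmetry.

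For $|\mathcal{A}|=|\mathcal{B}|$, I would rely on the explicit index constructed in the proof of the necessary condition of Lemma~\ref{betaSym}: for every $i\in\{1,\dots,n\}$, the forward enumeration $F_i=v_iv_{i+1}\dots v_{i-1}$ and the backward enumeration $G_{\sigma(i)}=v_{\sigma(i)}v_{\sigma(i)-1}\dots v_{\sigma(i)+1}$ with $\sigma(i)=n-(i-2)$ share the same type. A short check shows that $\sigma$ is an involution on $\{1,\dots,n\}$ (taken modulo $n$), hence a bijection. Since distinct indices $i$ yield distinct paths $F_i$ (because they omit distinct arcs of $C$), and similarly for the $G_j$'s, one has natural bijections of $\mathcal{A}$ and $\mathcal{B}$ with the index sets $\{i : F_i \text{ has type } P(\alpha)\}$ and $\{j : G_j \text{ has type } P(\alpha)\}$ respectively. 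Because $\sigma$ preserves types, it restricts to a bijection between these two index sets, yielding $|\mathcal{A}|=|\mathcal{B}|$.

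The main subtlety is in the disjointness argument: one must carefully use the fact that a path has exactly two enumerations (namely, each one and its reverse) in order to conclude that the ``forward-style'' enumeration from $\mathcal{A}$ and the ``backward-style'' enumeration from $\mathcal{B}$ of any shared path must coincide with these two. Once this is in place the contradiction is immediate via the reverse-enumeration type rule combined with Remark~\ref{zzzzzzzz}. The cardinality statement is then a direct bijection argument essentially packaged inside Lemma~\ref{betaSym}.
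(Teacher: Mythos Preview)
Your proposal is correct and follows essentially the same approach as the paper. For disjointness, both you and the paper argue that a path in $\mathcal{A}\cap\mathcal{B}$ would carry two enumerations---one forward along $C$, one backward---which, being the only two enumerations of a path, must be reverses of one another, forcing $\alpha=-\overline{\alpha}$ and contradicting Remark~\ref{zzzzzzzz}; for the equality, both use the explicit index map $i\mapsto n-(i-2)$ from the proof of Lemma~\ref{betaSym} to build the bijection (you additionally note that this map is an involution, which is a clean way to see bijectivity).
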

\begin{proof}
Suppose to the contrary that $\exists$ $P \in \mathcal{A}\cap \mathcal{B}$. Since $P\in \mathcal{A}$, then $\exists$ $1\leq i \leq n$ such that $P=v_iv_{i+1}\dots v_nv_1\dots v_{i-2}v_{i-1}$ and is of type $P(\alpha)=P(\alpha_1,\dots,\alpha_s)$ with respect to this enumeration. Since $P\in\mathcal{B}$ also, then $P=v_{i-1}v_{i-2}\dots v_1v_n\dots v_{i+1}v_i$ is of type $P(\alpha)=P(\alpha_1,\dots,\alpha_s)$ with respect to this enumeration, which means that $P=v_iv_{i+1}\dots v_nv_1\dots v_{i-2}v_{i-1}$ is of type $P(-\overline{\alpha})=P(-\alpha_s,\dots,-\alpha_1)$. Thus, $(\alpha_1,\dots,\alpha_s)=(-\alpha_s,\dots,-\alpha_1)$ which means that $\alpha$ is symmetric. But, since $C=C_P$, and since $P$ has the type $P(\alpha)$ where $\alpha$ is symmetric, then the cycle $C$ cannot be symmetric by Remark \ref{zzzzzzzz}, thus $\beta$ cannot be symmetric, which leads to a contradiction since $\beta$ is symmetric.\\
For the second part, consider the correspondence $f:$ $\mathcal{A}$ $\longrightarrow$ $\mathcal{B}$, such that for every $P=v_iv_{i+1}\dots v_nv_1\dots v_{i-1}$ of type $P(\alpha)$ with respect to this enumeration in $\mathcal{A}$, corresponds the path $P'=v_{n-(i-2)} v_{n-(i-2)-1} \dots v_1v_n \dots v_{n-(i-2)+1}$, which belongs to $\mathcal{B}$ since it is of type $P(\alpha)$ with respect to this enumeration, by Lemma \ref{betaSym}. The correspondence $f$ is trivially a bijective mapping, so $|\mathcal{A}|=|\mathcal{B}|$ which concludes the proof.
\end{proof}
The last lemma is a result proven implicitely in \cite{ElSahili-AA2}:
\begin{lemma}\label{clonetype}\cite{ElSahili-AA2}
Let $P=v_1 v_2 \dots v_n$ and $P^{'}=v_i v_{i+1} \dots v_n v_1 \dots v_{i-1}$ be two distinct oriented Hamiltonian paths in a tournament $T$ of order $n$, that generate a cycle $C=C_P=C_{P^{'}}$ in $T$. Then $P$ and $P{'}$ have the same type with respect to these enumerations if and only if $v_1$ and $v_i$ are clones.
\end{lemma}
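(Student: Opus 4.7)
The plan is to translate both sides of the equivalence into a single combinatorial property of the cyclic arc--direction word of $C$ and to match them step by step.

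To set things up, I would enumerate the vertices of $C$ in cyclic order as $v_1, v_2, \ldots, v_n, v_1$ and assign $d_j \in \{+1,-1\}$ to the direction of the arc between $v_j$ and $v_{j+1}$ (indices mod $n$), so that the blocks $I_1,\ldots,I_s$ of $C$ correspond to the maximal runs of constant sign in the cyclic word $(d_1,\ldots,d_n)$, with boundaries at cumulative positions $B_0 = 0 < B_1 < \ldots < B_s = n$ satisfying $B_\ell - B_{\ell-1} = |\beta_\ell|$. The crucial observation is that the type of $P = v_1 \ldots v_n$ is read off $(d_1,\ldots,d_{n-1})$ while the type of $P' = v_i v_{i+1} \ldots v_{i-1}$ is read off the cyclic shift $(d_i,\ldots,d_n,d_1,\ldots,d_{i-2})$, so $P$ and $P'$ have the same type if and only if $d_j = d_{j+k}$ for $j = 1,\ldots,n-1$, where $k := i-1$ (indices mod $n$).

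For the forward direction, assuming $d_j = d_{j+k}$ on $j = 1,\ldots,n-1$, I would first recover the missing constraint $d_0 = d_k$ by chaining around the shift-by-$k$ orbit of $0$ in $\mathbb{Z}/n\mathbb{Z}$: every arc of this cyclic orbit is enforced except possibly $(0,k)$, so traversing the remaining arcs once around the orbit yields $d_0 = d_k$ by transitivity. Hence the full cyclic word is invariant under shift by $k$, which forces block boundaries to be sent to block boundaries (otherwise a sign change would appear at a non-boundary position and contradict the shift invariance). This yields an integer $r$ with $B_{\ell+r} \equiv B_\ell + k \pmod n$ for every $\ell$, so $r$ is a period of $\beta$ and in particular a multiple of $r(\beta)$. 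If $v_1$ sits at position $a$ from $x_\ell$ in $I_\ell$, the same shift puts $v_i = v_{1+k}$ at position $a$ from $x_{\ell+r}$ in $I_{\ell+r}$; by definition, $v_1$ and $v_i$ are clones.

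For the converse, suppose $v_1 \in I_\ell$ and $v_i \in I_{\ell+r}$ are clones, with $r$ a multiple of $r(\beta)$ and the two vertices at the same relative position $a$ in their blocks. Set $L := |\beta_\ell| + |\beta_{\ell+1}| + \ldots + |\beta_{\ell+r-1}|$; periodicity of $\beta$ makes $L$ independent of the starting index $\ell$, and since $v_i$ lies exactly $L$ arcs after $v_1$ along $C$, one has $k = i - 1 = L$. A shift by $L$ arcs therefore sends each block $I_m$ onto the similar block $I_{m+r}$, so $d_{j+k} = d_j$ for all $j$, and the equality of the two direction sequences, hence of the two types, follows. The main technical nuisance is the situation where $v_1$ or $v_i$ sits at the shared endpoint of two adjacent blocks, where the containing block is ambiguous; I would resolve this by fixing the convention that such a vertex is assigned to the block it is the starting endpoint of, and checking that the clone relation, which depends only on cumulative arc position along $C$, is invariant under this convention, so the argument above goes through uniformly.
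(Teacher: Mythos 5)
The paper does not actually prove this lemma; it is imported as "proven implicitly in \cite{ElSahili-AA2}", so there is no in-paper argument to compare against. Your proof is a correct, self-contained argument, and it is essentially the natural one: encoding the two types as the direction words $(d_1,\dots,d_{n-1})$ and $(d_i,\dots,d_0,\dots,d_{i-2})$, observing that equality of types is exactly the shift-by-$k$ constraint on all positions except $j=0$, and then recovering $d_0=d_k$ by chaining around the $+k$-orbit of $0$ (which is legitimate since $P\neq P'$ forces $k\not\equiv 0$, so the orbit has length at least $2$ and all but one of its constraints are available). The translation between full shift-invariance of the cyclic word and the clone relation is also handled correctly in both directions: a rotation preserving the sign pattern must permute block boundaries cyclically, the resulting index offset $r$ satisfies $\beta_{\ell+r}=\beta_\ell$ and is hence a multiple of $r(\beta)$, and conversely the period sum $L=|\beta_\ell|+\dots+|\beta_{\ell+r-1}|$ is independent of $\ell$, so the shift by $L=k$ fixes the whole word. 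Two small points you flag or should flag explicitly when writing this up: the boundary-vertex convention (your fix is consistent with the paper's definition of clones, since $I_i\sim I_j$ implies $I_{i+1}\sim I_{j+1}$ and the two possible relative positions $|\beta_i|$ and $0$ agree in the two assignments), and the fact that $v_1$ need not itself be a block boundary, so the $B_\ell$ should be read as a cyclic set of positions rather than starting at $B_0=0$; neither affects the argument.
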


We may now give the \textbf{Proof of Theorem \ref{EquClass}}:
\begin{proof}
The set $\overline{P}$ contains $P$ as well as the paths $P^{'}$ in $T$ that have the same type as the type of $P$ and such that $C_{P'}=C_P$. Let $C_P=v_1v_2\dots v_n$ such that $C_P$ is of type $C(\beta)$, with respect to this enumeration and $P=v_iv_{i+1}\dots v_nv_1\dots v_{i-1}$ for some $1\leq i \leq n$. If we consider all the paths $P'$ having the form $v_jv_{j+1}\dots v_nv_1\dots v_{j-1}$, and that have the same type as $P$ with respect to these enumerations, and such that $C_P=C_{P'}$, then by Lemma \ref{clonetype},  the number of such paths is exactly the number of clones that an end of $P$ could have, that is $t(\beta)-1$.\\
Let $\mathcal{A}$ be the set of paths that generate $C$ and  have the same type as the type of $P$, following the order $v_1v_2\dots v_n$ of the vertices, and $\mathcal{B}$ be the set of paths that generate $C$ and have the same type as the type of $P$, following the order $v_1v_n\dots v_2$ of the vertices. We have $\mid \mathcal{A} \mid = t(\beta)$. Now let's count the number of paths in $\mathcal{B}$. \\
If the cycle $C$ is non-symmetric, then by Lemma \ref{betaSym}, $\forall 1\leq i\leq n$, the path $P'=v_iv_{i-1}\dots v_1v_n\dots v_{i+1}$ cannot have the same type of $P$ with respect to this enumeration, thus the set $\mathcal{B}$ is empty. As a result, $\mid\overline{P}\mid =\mid\mathcal{A}\mid=t(\beta)$.\\
If the cycle $C$ is symmetric, (we may suppose without loss of generality that the cycle $C=v_1v_2 \dots v_n$ is of type $C(\beta)$ with respect to this enumeration, where $\beta$ is symmetric) then by Lemma \ref{betaSym}, the set $\mathcal{B}$ is non-empty, and by Lemma \ref{intersection} we have that $\mid \mathcal{A} \mid = \mid \mathcal{B}\mid$ and that the sets $\mathcal{A}$ and $\mathcal{B}$ are disjoint, thus we deduce that $\mid\overline{P}\mid = \mid \mathcal{A} \mid + \mid \mathcal{B} \mid= t(\beta)+t(\beta)=2.t(\beta)$. This concludes our proof.
\end{proof}

Note that all of the above results of this section are true for any oriented paths and cycles that are not necessarily Hamiltonian, since any path or cycle defines a set of vertices, and hence a subtournament in which the path and the cycle are Hamiltonian.\\
\linebreak
\noindent The following lemma, proved in \cite{ElSahili-AA2}, is of practical use in the next theorem:

\begin{lemma}\label{equalind}\cite{ElSahili-AA2}
Let $\alpha_1,\alpha_2,\dots,\alpha_s$, $\beta_1,\dots,\beta_s \in \mathbb{Z}$. \\If $(\alpha_1,\alpha_2,\dots,\alpha_s)=(\beta_i,\beta_{i+1},\dots,\beta_s,\beta_1,\dots,\beta_{i-1})$, then for any integer $k\geq 0$, we have
$$
\alpha_{k_s} = \beta_{[k+i-1]_s}.
$$
\end{lemma}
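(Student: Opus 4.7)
The lemma is a purely combinatorial bookkeeping statement relating the tuple equality in the hypothesis to the more uniform cyclic-index notation $[\cdot]_s$. The plan is to first reinterpret the hypothesis as $s$ entrywise equations of the form $\alpha_j = \beta_{[j+i-1]_s}$, and then extend this from $j \in \{1,\dots,s\}$ to arbitrary $k \geq 0$ via reduction modulo $s$.

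First I would unpack the equality $(\alpha_1,\dots,\alpha_s)=(\beta_i,\beta_{i+1},\dots,\beta_s,\beta_1,\dots,\beta_{i-1})$ componentwise. For $j \in \{1,\dots,s-i+1\}$ the $j$-th component gives $\alpha_j = \beta_{j+i-1}$, while for $j \in \{s-i+2,\dots,s\}$ it gives $\alpha_j = \beta_{j+i-1-s}$. The point is that in both ranges the subscript on the right is exactly the unique representative in $\{1,\dots,s\}$ of $j+i-1$ modulo $s$, so the two cases merge into the single identity
$$\alpha_j = \beta_{[j+i-1]_s} \qquad \text{for all } j \in \{1,\dots,s\}.$$

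Next I would handle a general integer $k \geq 0$. By definition $k_s$ is the unique element of $\{1,\dots,s\}$ with $k \equiv k_s \pmod{s}$, and applying the previous identity with $j=k_s$ yields $\alpha_{k_s} = \beta_{[k_s+i-1]_s}$. Finally, $k \equiv k_s \pmod{s}$ implies $k+i-1 \equiv k_s+i-1 \pmod{s}$, and since $[\cdot]_s$ depends only on the residue class, $[k+i-1]_s = [k_s+i-1]_s$. Substituting gives $\alpha_{k_s} = \beta_{[k+i-1]_s}$, as desired.

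There is no real obstacle here; the only care required is in the boundary case $k = 0$, where one must use the convention $0_s = s$ and verify that $\beta_{[s+i-1]_s}$ is indeed $\beta_{i-1}$ for $i \geq 2$ and $\beta_s$ for $i = 1$, matching $\alpha_s$ from the tuple equality. Since the argument is nothing more than a consistent translation between two notations for cyclic indexing, the proof is short and mechanical once the componentwise identity $\alpha_j = \beta_{[j+i-1]_s}$ is established.
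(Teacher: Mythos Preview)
Your proof is correct. The paper does not supply its own proof of this lemma: it is stated with a citation to \cite{ElSahili-AA2} and used without argument. Your direct componentwise unpacking followed by reduction modulo $s$ is exactly the natural verification, and there is nothing to compare against here.
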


\begin{remark}
We saw in the second case of Remark \ref{generatedcycles} that if $\alpha=(\alpha_1,\dots,\alpha_s)\in \mathbb{Z}^s$, $\alpha_i \cdot \alpha_{i+1}< 0$, $s$ is odd, $T$ is a tournament of order $n=\sum\limits_{i=1}^s \mid \alpha_i \mid +1$, and $P$ be a Hamiltonian path of type $P(\alpha)$ in $T$, then the two types of cycles that can be generated by $P$ have either $s-1$ or $s+1$ blocks. So if we call $C(\beta)$ and $C(\beta')$ these two types, then obviously, the sets $\mathcal{C}_T(\beta)$ and $\mathcal{C}_T(\beta')$ are different. Remark also that when $s$ is odd, $\alpha$ is always non-symmetric, because we can't have $\alpha_1=-\alpha_s$ since $\alpha_1$ and $\alpha_s$ have the same sign.
\end{remark}
However, when $s$ is even, it's a different story. In fact, we have the following result:

\begin{theorem}\label{EqSym}
Let $\alpha=(\alpha_1,\dots,\alpha_s)\in \mathbb{Z}^s$, $\alpha_i \cdot \alpha_{i+1}< 0$, and let $T$ be a tournament of order $n=\sum\limits_{i=1}^s \mid \alpha_i \mid +1$.  We have:
$$\mathcal{C}_T(\beta)=\mathcal{C}_T(\beta')\iff \alpha \  is \ symmetric,$$
\noindent where $C(\beta)$ and $C(\beta')$ are the two types of Hamiltonian cycles in $T$ that can be generated by a Hamiltonian path of type $P(\alpha)$ in $T$.
\end{theorem}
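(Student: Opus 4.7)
The plan is to use the cycle-type equivalence stated in Section~2, namely $\mathcal{C}_T(\gamma_1)=\mathcal{C}_T(\gamma_2)$ iff $\gamma_2$ is a cyclic shift of $\gamma_1$ or of $-\overline{\gamma_1}$, to reduce the statement to a comparison of integer tuples. I would first assume without loss of generality that $\alpha_1>0$ (so $\alpha_s<0$, as $s$ must be even). By Remark~\ref{generatedcycles} one can then write, in vector notation in $\mathbb{Z}^s$ with $e_j$ the $j$-th standard basis vector, $\beta=\alpha+e_1$ and $\beta'=\alpha-e_s$; a direct computation gives $-\overline\beta=-\overline\alpha-e_s$.

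The direction $(\Leftarrow)$ is then immediate: if $\alpha=-\overline\alpha$, then $-\overline\beta=\alpha-e_s=\beta'$, so $\beta$ and $\beta'$ represent the same oriented cycle type and $\mathcal{C}_T(\beta)=\mathcal{C}_T(\beta')$.

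For $(\Rightarrow)$, the hypothesis forces $\beta'$ to be either (i) a cyclic shift of $\beta$ or (ii) a cyclic shift of $-\overline\beta$. Case (i) is excluded at once by a sum argument: cyclic shifts preserve $\sum_j \gamma_j$, while $\sum_j\beta_j-\sum_j\beta'_j=2\ne 0$.

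The main difficulty is case (ii). Writing $\beta'=\sigma_m(-\overline\beta)$, where $\sigma_m$ denotes the cyclic shift $(\gamma_j)_j\mapsto(\gamma_{[m+j-1]_s})_j$ with $m\in\{1,\dots,s\}$, and noting $\sigma_m(e_s)=e_{[s-m+1]_s}$, the assumption turns into the single vector identity
\[
\alpha-\sigma_m(-\overline\alpha)=e_s-e_{[s-m+1]_s}.
\]
For $m=1$ the right-hand side is zero and the identity reads $\alpha=-\overline\alpha$, i.e.\ $\alpha$ is symmetric, which is what we want. For $m\ne 1$ I would rule out the identity in two sub-cases. Since $s$ is even, both $\alpha$ and $-\overline\alpha$ have the strict sign pattern $+,-,+,-,\dots$, and $\sigma_m$ preserves this pattern when $m$ is odd and inverts it when $m$ is even. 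In the even case, $\alpha_j$ and $\sigma_m(-\overline\alpha)_j$ have opposite signs at every $j$, so $|\alpha_j-\sigma_m(-\overline\alpha)_j|\ge 2$ for all $j$, while the right-hand side has all coordinates in $\{-1,0,1\}$, a contradiction. For $m$ odd with $3\le m\le s-1$, the delicate step is to evaluate the identity at the two indices $j=s$ and $j=s-m+2$: using $\sigma_m(-\overline\alpha)_s=-\alpha_{s-m+2}$ and $\sigma_m(-\overline\alpha)_{s-m+2}=-\alpha_s$, these yield respectively $\alpha_s+\alpha_{s-m+2}=1$ and (since $s-m+2\notin\{s,s-m+1\}$ so the right-hand side vanishes there) $\alpha_s+\alpha_{s-m+2}=0$, the desired contradiction. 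Hence only $m=1$ survives, forcing $\alpha$ to be symmetric and completing the proof.
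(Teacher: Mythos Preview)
Your proof is correct and follows a genuinely different route from the paper's.

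Both arguments reduce the question to the tuple equivalence for cycle types stated in Section~2, assume $s$ even (the odd case being covered by the remark immediately preceding the theorem --- you might say this explicitly rather than just ``as $s$ must be even''), and dispatch the sufficient direction by direct computation. The necessary direction is where the two diverge. For the case where $\beta'$ is a cyclic shift of $\beta$ itself, your sum argument is considerably cleaner: cyclic shifts preserve $\sum_j\beta_j$, but $\sum_j\beta_j-\sum_j\beta'_j=2$, and you are done. The paper instead iterates Lemma~\ref{equalind} to show $\alpha_1=\alpha_{[1+k(i-1)]_s}$ for all $k\ge 1$, and then locates an index at which the relation forces $\alpha_1=\alpha_1+1$. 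For the case where $\beta'$ is a cyclic shift of $-\overline\beta$, the paper is actually shorter: for every nontrivial shift index $2\le i\le s-1$ it simply reads off the two relations $\alpha_1=-\alpha_i$ and $\alpha_i=-\alpha_1-1$ from positions $1$ and $i$, yielding $\alpha_1=\alpha_1+1$, with no parity split. Your approach works too but does a bit more bookkeeping: you rule out even $m$ by the sign-pattern observation, and for odd $m\ge 3$ you evaluate at $j=s$ and $j=s-m+2$ to obtain $\alpha_s+\alpha_{s-m+2}$ equal to both $1$ and $0$. In short, your case~(i) is more elegant than the paper's, while the paper's case~(ii) is more uniform than yours; both are valid.
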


\begin{proof}
The case where $s$ is odd being completely settled by Remark 13, we may assume that $s$ is even.\\
By the first case of Remark \ref{generatedcycles}, when $\alpha_1 >0$, then $\beta=(\beta_1,\beta_2,\dots,\beta_s)=(\alpha_1+1,\alpha_2,\dots,\alpha_s)$ and $\beta'=(\beta'_1,\beta'_2,\dots,\beta'_s)=(\alpha_1,\dots,\alpha_{s-1},\alpha_s-1)$, while if $\alpha_1 <0$, then $\beta=(\beta_1,\beta_2,\dots,\beta_s)=(\alpha_1-1,\alpha_2,\dots,\alpha_s)$ and $\beta'=(\beta'_1,\beta'_2,\dots,\beta'_s)=(\alpha_1,\dots,\alpha_{s-1},\alpha_s+1)$.\\
We will treat the case where $\alpha_1>0$, and the other case is similar.\\
For the sufficient condition, suppose that $\alpha$ is symmetric, thus $\alpha=(\alpha_1,\dots,\alpha_s)=(-\alpha_s,\dots,-\alpha_1)$, which implies that $\mathcal{C}_T(\beta)=\mathcal{C}_T(\alpha_1+1,\dots,\alpha_s)$ is equal to $\mathcal{C}_T(-\alpha_s+1,\dots,-\alpha_1)$. Moreover, this set is equal to the set $\mathcal{C}_T(\beta')=\mathcal{C}_T(\alpha_1,\dots,\alpha_s-1)$, thus $\mathcal{C}_T(\beta)=\mathcal{C}_T(\beta')$. \\
For the necessary condition, suppose $\mathcal{C}_T(\beta)=\mathcal{C}_T(\beta')$, i.e. $\mathcal{C}_T(\alpha_1+1,\alpha_2,\dots,\alpha_s)=\mathcal{C}_T(\alpha_1,\dots,\alpha_{s-1},\alpha_s-1)$. Thus, since $\alpha_1$ is different from $\alpha_1+1$ and $-\alpha_1-1$, then $(\alpha_1,\dots,\alpha_s-1)$ is equal to one of these tuples:
\begin{enumerate}
\item $(\alpha_i,\alpha_{i+1},\dots,\alpha_s,\alpha_1+1,\alpha_2 \dots,\alpha_{i-1})$ for some $2\leq i \leq s$
\item $(-\alpha_i,-\alpha_{i-1},\dots,-\alpha_2,-\alpha_1-1,-\alpha_s,\dots,-\alpha_{i+1})$ for some $2\leq i \leq s$
\end{enumerate}
Suppose that the first case is true, i.e. $(\alpha_1,\dots,\alpha_s-1)=(\alpha_i,\alpha_{i+1},\dots,\alpha_s,\alpha_1+1,\alpha_2 \dots,\alpha_{i-1})=(\beta_i,\beta_{i+1},\dots,\beta_s,\beta_1,\beta_2 \dots,\beta_{i-1})$ for some $2\leq i \leq s$.\\
First observe that $$\beta_{{\left[i \right]}_s}=\left\lbrace
\begin{array}{ccc}
\alpha_{{\left[i \right]}_s} \ &\mbox{if} \ &1< i \leq s\\
\alpha_1+1 \ &\mbox{if} \ &i=1\\
\end{array} \right..$$\\
We have: $\alpha_1=\alpha_{1_s}=\beta_{\left[ 1+i-1 \right]_s}$ (by Lemma \ref{equalind}) $=\alpha_{\left[ 1+i-1 \right]_s}$ (since otherwise we get $\alpha_1=\alpha_1+1$ which is a contradiction) $=\beta_{\left[ 1+2(i-1) \right]_s}$ (also by Lemma \ref{equalind}) $=\alpha_{\left[ 1+2(i-1) \right]_s}$ (also so that we don't get $\alpha_1=\alpha_1+1$, a contradiction). And so on, we may prove by induction that $$\alpha_1= \alpha_{\left[ 1+k(i-1) \right]_s} \ \forall k \in \mathbb{N^*}, \ \forall i\geq 2. \  (*)$$
Now, observe that $\alpha_1=\alpha_i$, $\alpha_2=\alpha_{i+1}$, $\dots$, $\alpha_{s-i+1}=\alpha_s$ and $\alpha_{s-i+2}=\alpha_1+1$.\\
Moreover, we can write $s-i+2=1+k'(i-1)+\lambda.s=\left[1+k'(i-1)\right]_s$, with $k'=s-1 \in \mathbb{N^*}$ and $\lambda=2-i \in \mathbb{Z}$.\\
It follows that $\alpha_{s-i+2}=\alpha_{\left[ 1+k'(i-1)\right] _s}=\alpha_{1}$ by ($*$). But $\alpha_{s-i+2} = \alpha_1+1$, thus we reach a contradiction. So the first case cannot occur.\\
Now consider the second case. First suppose that $i\neq s$. We have $\alpha_1=-\alpha_i$ for some $2\leq i \leq s-1$, $\alpha_2=-\alpha_{i-1}$, $\alpha_3=-\alpha_{i-2}$, $\dots$, $\alpha_{i-1}=\alpha_{i-((i-1)-1)}=-\alpha_2$ and $\alpha_i=-\alpha_1-1$. Thus $\alpha_1=-\alpha_i=\alpha_1+1$ and we reach a contradiction. So the second case is impossible for $2\leq i \leq s-1$. If $i=s$, we have $\alpha_1=-\alpha_s$, $\alpha_2=-\alpha_{s-1}$, $\alpha_3=-\alpha_{s-2}$ $\dots$,  $\alpha_{s-1}=-\alpha_{s-((s-1)-1)}=-\alpha_2$ and $\alpha_s-1=-\alpha_1-1$ which also means that $\alpha_s=-\alpha_1$. Thus $(\alpha_1,\dots,\alpha_s)=(-\alpha_s,\dots,-\alpha_1)$ and as a result $\alpha$ is symmetric.
\end{proof}

Let $\beta=(\beta_1,\beta_2,\dots,\beta_s)\in\mathbb{Z}^s$; $s$ is even, and $\beta_i\beta_{i+1}<0 \ \forall \ i=1,\dots,s-1$. \\Then $\forall$ $1\leq i \leq s$, define $\beta_i \ast 1$ as:
\begin{equation*}
\beta_i\ast1=
\left\lbrace
\begin{array}{ccc}
\beta_i-1  & \mbox{if} & \beta_1 > 0 \\
\beta_i+1 & \mbox{if} & \beta_1 < 0 \\
\end{array}\right.
\end{equation*}
We are now ready to link between the number of Hamiltonian paths of some type $P(\alpha)$ in $T$ and the number of Hamiltonian cycles of types $C(\beta)$ and $C(\beta')$ (mentioned in the beginning of this section):
\begin{theorem}\label{nonsym}
Let $T$ be a tournament of order $n$, and $(\beta_1,\dots,\beta_s)\in\mathbb{Z}^s;$\\$\sum\limits_{i=1}^{s}\mid\!\beta_i\!\mid = n$, $s$ is even, and $\beta_i\beta_{i+1}<0 \ \forall \ i=1,\dots,s-1$. Then:\\
If $(\beta_1\ast 1, \beta_2,\dots, \beta_s)$ is symmetric, we have:
\begin{equation*}
  f_T(\beta_1\ast 1,\beta_2,\dots,\beta_s)= g_T(\beta_1,\beta_2,\dots,\beta_s).t(\beta_1,\beta_2,\dots,\beta_s).
\end{equation*}
Otherwise, we have:\\
$f_T(\beta_1\ast 1,\beta_2,\dots,\beta_s)= \delta(\beta_1,\beta_2,\dots,\beta_s).g_T(\beta_1,\beta_2,\dots,\beta_s).t(\beta_1,\beta_2,\dots,\beta_s)$\\$+\delta(\beta_1\ast 1,\beta_2,\dots,\beta_s\ast 1).g_T(\beta_1\ast 1,\beta_2,\dots,\beta_s\ast 1).t(\beta_1\ast 1,\beta_2,\dots,\beta_s\ast 1)$\\
\linebreak
where $\delta(\gamma)=
\left\lbrace
\begin{array}{ccc}
1  & \mbox{if} & \gamma \ is \ non-symmetric \ and \ not \ a \ singleton\\
2  & \mbox{if} & \gamma \ \ is \ symmetric\\
\frac{n}{t(\gamma)}  & \mbox{if} & \gamma \ \ is \ a \ singleton \\
\end{array}\right. $
\end{theorem}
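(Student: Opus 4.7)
The plan is to evaluate $f_T(\alpha)$ for $\alpha=(\beta_1\ast 1,\beta_2,\dots,\beta_s)$ by partitioning $\mathcal{P}_T(\alpha)$ into $\mathcal{R}_\alpha$-equivalence classes and summing the class sizes. By Remark \ref{generatedcycles} (the even-$s$ case), every path $P$ of type $P(\alpha)$ generates a cycle $C_P$ whose type is either $\beta=(\beta_1,\dots,\beta_s)$ or $\beta'=(\beta_1\ast 1,\beta_2,\dots,\beta_s\ast 1)$, the choice being determined by the orientation in $T$ of the arc joining the ends of $P$. The assignment $\overline{P}\mapsto C_P$ is a bijection from the set of $\mathcal{R}_\alpha$-classes to $\mathcal{C}_T(\beta)\cup\mathcal{C}_T(\beta')$, since conversely each cycle of type $\beta$ or $\beta'$ contains at least one edge whose removal yields a path of type $P(\alpha)$. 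Consequently
$$f_T(\alpha)=\sum_{C\in\mathcal{C}_T(\beta)\cup\mathcal{C}_T(\beta')}|\overline{P_C}|,$$
where $\overline{P_C}$ is the $\mathcal{R}_\alpha$-class generating $C$, and each $|\overline{P_C}|$ is controlled by Theorem \ref{EquClass} together with Remark \ref{EquClass1.5}.

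If $\alpha$ is symmetric, Theorem \ref{EqSym} gives $\mathcal{C}_T(\beta)=\mathcal{C}_T(\beta')$, so the union above counts each cycle exactly once. Moreover, Remark \ref{zzzzzzzz} forces every cycle generated by an $\alpha$-path to be non-symmetric, so Theorem \ref{EquClass} yields $|\overline{P_C}|=t(\beta)$ for every class. Summing therefore produces the first claimed equality $f_T(\alpha)=g_T(\beta)\cdot t(\beta)$.

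If $\alpha$ is not symmetric, Theorem \ref{EqSym} gives $\mathcal{C}_T(\beta)\cap\mathcal{C}_T(\beta')=\emptyset$, so the sum splits into two independent contributions from $\mathcal{C}_T(\beta)$ and $\mathcal{C}_T(\beta')$. For each $\gamma\in\{\beta,\beta'\}$ and each cycle $C\in\mathcal{C}_T(\gamma)$, the three possibilities afforded by Theorem \ref{EquClass} and Remark \ref{EquClass1.5}---namely $|\overline{P_C}|=t(\gamma)$ when $C$ is non-symmetric and not a circuit, $|\overline{P_C}|=2t(\gamma)$ when $C$ is symmetric, and $|\overline{P_C}|=n=(n/t(\gamma))\cdot t(\gamma)$ when $C$ is a Hamiltonian circuit---exactly match the three branches defining $\delta(\gamma)$. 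Thus each cycle of type $\gamma$ contributes $\delta(\gamma)\cdot t(\gamma)$, and summing over $\gamma\in\{\beta,\beta'\}$ produces the stated two-term formula. The hard part will be in this second case: to justify that the tuple-level branches in the definition of $\delta(\gamma)$ genuinely coincide with the cycle-level dichotomies of Theorem \ref{EquClass}, one needs to exploit the non-symmetry of $\alpha$ together with Theorem \ref{EqSym} and the clone/period machinery to show that, for $\gamma\in\{\beta,\beta'\}$, cycles in $\mathcal{C}_T(\gamma)$ are symmetric precisely when $\gamma$ is symmetric and are Hamiltonian circuits precisely when $\gamma$ is a singleton.
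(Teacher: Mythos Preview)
Your proposal is correct and follows essentially the same route as the paper: partition $\mathcal{P}_T(\alpha)$ into $\mathcal{R}_\alpha$-classes, identify each class with a cycle of type $\beta$ or $\beta'$, invoke Theorem \ref{EqSym} to decide whether $\mathcal{C}_T(\beta)$ and $\mathcal{C}_T(\beta')$ coincide or are disjoint, and then read off the class sizes from Theorem \ref{EquClass} and Remark \ref{EquClass1.5}. Your explicit appeal to Remark \ref{zzzzzzzz} in the symmetric case is if anything cleaner than the paper's phrasing, which simply asserts that neither $\beta$ nor $\beta'$ can be symmetric or a singleton.

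Regarding the ``hard part'' you flag at the end: the paper's own proof does not carry out any such verification either. It passes directly from ``$C_P$ is symmetric/non-symmetric/a circuit'' in Theorem \ref{EquClass} to ``$\beta$ is symmetric/non-symmetric/a singleton'' in the definition of $\delta$, treating the two as interchangeable. In other words, the paper implicitly reads ``$\gamma$ is symmetric'' in the definition of $\delta(\gamma)$ at the cycle level (i.e.\ some representative of the cycle type $C(\gamma)$ is a symmetric tuple), which is a property shared by all cycles in $\mathcal{C}_T(\gamma)$. Under that reading there is nothing further to prove, and your concern---while a legitimate observation about the notation---does not point to missing content in the argument.
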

\begin{proof}
In order to prove this theorem, let us compute $f_T(\beta_1\ast 1,\dots,\beta_s)$.\\
Consider the set $\mathcal{P}_T(\beta_1\ast 1,\beta_2,\dots,\beta_s)$ and let $P=x_1\dots x_n$ be an element of this set. $C_P$ is either of type $C(\beta_1,\dots,\beta_s)$ or of type $C(\beta_1\ast 1,\beta_2,\dots,\beta_s\ast 1)$ whether $(x_n,x_1)$ or $(x_1,x_n)$ $\in E(T)$. \\
Let $\mathcal{C}_T(\beta)=\lbrace C_1,\dots,C_t\rbrace$ be the set of cycles of type $C(\beta_1,\dots,\beta_s)$ in $T$, and let $\mathcal{C}_T(\beta')=\lbrace C'_1,\dots,C'_r\rbrace$ be the set of cycles of type $C(\beta_1\ast 1,\beta_2,\dots,\beta_s\ast 1)$ in $T$.
We have two cases to consider:
\begin{enumerate}
\item[(a)] If $(\beta_1\ast 1,\beta_2,\dots,\beta_s)$ is symmetric, then  by Theorem \ref{EqSym}, $\mathcal{C}_T(\beta)= \mathcal{C}_T(\beta')$. Thus we only have to consider one of them, say $\mathcal{C}_T(\beta)$, to avoid counting the same cycle twice in the following step.\\
Let $\mathcal{C}_T(\beta)=\lbrace C_1,C_2,\dots,C_t \rbrace$. We have that for all $C_i \in \mathcal{C}_T(\beta)$, there exists a subclass $X_i$ of $\mathcal{P}_T(\beta_1\ast 1,\beta_2,\dots,\beta_s)$ with respect to $\mathcal{R}_{(\beta_1\ast 1,\beta_2,\dots,\beta_s)}$ such that every path in $X_i$ generates $C_i$. Thus by Theorem \ref{EquClass}, $\mid X_i \mid = \mid \overline{P} \mid = t(\beta)$ for some $P\in X_i$, since if $\beta_1\ast 1,\beta_2,\dots,\beta_s$ is symmetric, none of $\beta$ or $\beta'$ can be symmetric, nor a singleton. Hence, 
\begin{eqnarray*}
f_T(\beta_1\ast 1,\beta_2,\dots,\beta_s)= &&\sum_{i=1}^t \mid X_i \mid  
\ =  \ \sum_{i=1}^t t(\beta)\\
 = && t.t(\beta) 
\  = \  \mid \mathcal{C}_T(\beta) \mid . t(\beta) \\
 =&&  g_T(\beta).t(\beta).
 \end{eqnarray*}

\item[(b)] If $\beta_1\ast 1,\beta_2,\dots,\beta_s$ is non-symmetric, then by Theorem \ref{EqSym}, $\mathcal{C}_T(\beta)\neq \mathcal{C}_T(\beta')$, thus $\mathcal{C}_T(\beta)\cap \mathcal{C}_T(\beta')=\emptyset$ (because the sets of every type of Hamiltonian cycles form a partition of the set of all oriented Hamiltonian cycles in $T$).\\
For all $C_i \in \mathcal{C}_T(\beta)$, there exists a subclass $X_i$ of $\mathcal{P}_T(\beta_1\ast 1,\beta_2,\dots,\beta_s)$ with respect to $\mathcal{R}_{(\beta_1\ast 1,\beta_2,\dots,\beta_s)}$ such that every path in $X_i$ generates $C_i$. Thus by Theorem \ref{EquClass}, and Remark \ref{EquClass1.5}, $\mid X_i \mid = \mid \overline{P} \mid = t(\beta)$ or $2.t(\beta)$ or $n$ for some $P\in X_i$, whether $\beta$ is non-symmetric and not a singleton, is symmetric, or is a singleton, so $\mid X_i \mid = \mid \overline{P} \mid=\delta(\beta).t(\beta)$. \\ Similarly, $\forall$ $C'_j \in \mathcal{C}_T(\beta')$, $\exists$ a subclass $X'_j$ of $\mathcal{P}_T(\beta_1\ast 1,\beta_2,\dots,\beta_s)$ with respect to $\mathcal{R}_{(\beta_1\ast 1,\beta_2,\dots,\beta_s)}$ such that every path in $X'_j$ generates $C'_j$. Thus $\mid X'_j \mid = \mid \overline{P'} \mid =\delta(\beta'). t(\beta')$ for some $P'\in X'_j$.\\
Hence, 
\begin{eqnarray*}
f_T(\beta_1\ast 1,\beta_2,\dots,\beta_s)= &&\sum_{i=1}^t \mid X_i \mid + \sum_{j=1}^r \mid X'_j \mid \\
= && \sum_{i=1}^t \delta(\beta).t(\beta)+ \sum_{j=1}^r \delta(\beta').t(\beta') \\
= && t.\delta(\beta).t(\beta)+r.\delta(\beta').t(\beta') \\
= && \mid \mathcal{C}_T(\beta) \mid . \delta(\beta).t(\beta)+\mid \mathcal{C}_T(\beta') \mid . \delta(\beta').t(\beta') \\ 
= && g_T(\beta).\delta(\beta).t(\beta)+g_T(\beta').\delta(\beta').t(\beta'),
\end{eqnarray*}
and this concludes our proof.
\end{enumerate}
\end{proof} 
Moreover, for the case when the type of oriented paths in a tournament $T$ is symmetric, we have the following property:

\begin{theorem}\label{t=1}
Let $\alpha=(\alpha_1,\dots,\alpha_s)\in \mathbb{Z}^s$, $\alpha_i \cdot \alpha_{i+1}< 0$, $\alpha$ symmetric, and $T$ a tournament of order $n=\sum\limits_{i=1}^{s}\mid\alpha_i\mid +1$. Let $P$ be a Hamiltonian path in $T$ of type $P(\alpha)$ and $C_P\in \mathcal{C}_T(\beta)$. Then we have: $$t(\beta)=1.$$
\end{theorem}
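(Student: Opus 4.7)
The plan is to argue by contradiction, exploiting the tight interplay between the symmetry of $\alpha$ and the hypothetical periodicity of $\beta$. First observe that $\alpha$ symmetric means $\alpha_i=-\alpha_{s+1-i}$, so in particular $\alpha_1=-\alpha_s$ have opposite signs and thus $s$ is even. Therefore Case 1 of Remark \ref{generatedcycles} applies: $\beta$ is obtained from $\alpha$ by modifying exactly one extremal coordinate (the first or the last), incrementing its absolute value by one. I would write the proof in full only for the representative subcase $\alpha_1>0$ with $\beta=(\alpha_1+1,\alpha_2,\ldots,\alpha_s)$; the three remaining subcases are strictly analogous via the symmetries $\alpha\mapsto -\alpha$ and $\alpha\mapsto\overline{\alpha}$.

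Suppose for contradiction that $r:=r(\beta)<s$. Then $r\mid s$ and $t:=s/r\geq 2$, so the residue classes modulo $r$ inside $\{1,\ldots,s\}$ are the orbits $\{j,j+r,\ldots,j+(t-1)r\}$ for $1\leq j\leq r$, and $\beta$ is constant on each of them. Since the signs of $\beta$ alternate, $\beta_1$ and $\beta_{1+r}$ have the same sign only if $1+r$ has the same parity as $1$, i.e.\ $r$ is even; in particular $r\geq 2$ (equivalently: $r=1$ would force $\beta$ to be constant, contradicting $\beta_1\beta_2<0$).

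The core of the argument is to evaluate $\alpha_r$ in two incompatible ways. The orbit of $1$ is $\{1,1+r,\ldots,s-r+1\}$; here $\beta_1=\alpha_1+1$, while at every other position $\beta_j$ equals $\alpha_j$. Constancy therefore gives $\alpha_{s-r+1}=\alpha_1+1$, and invoking the symmetry of $\alpha$ in the form $\alpha_{s-r+1}=-\alpha_r$ yields
\[
\alpha_r=-\alpha_1-1.
\]
Now consider the orbit of $r$, namely $\{r,2r,\ldots,s\}$. Since $r\geq 2$, the index $1$ is not in this orbit, so $\beta_j=\alpha_j$ at every element, and in particular $\beta_r=\beta_s$ gives $\alpha_r=\alpha_s=-\alpha_1$ by the symmetry of $\alpha$. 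Combining the two expressions for $\alpha_r$ forces $-\alpha_1=-\alpha_1-1$, a contradiction. Hence $r(\beta)=s$ and $t(\beta)=1$.

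The only real obstacle is the bookkeeping of the four subcases, but each one collapses to the same mechanism: the perturbed extremal coordinate and the unperturbed opposite extremal coordinate lie in two different orbits mod $r$ (provided $r\geq 2$, which is automatic), and the symmetry relation $\alpha_i=-\alpha_{s+1-i}$ translates the constancy of $\beta$ on these two orbits into two incompatible values for a single entry of $\alpha$. Nothing beyond the definitions of "period" and "symmetric" and Remark \ref{generatedcycles} is required.
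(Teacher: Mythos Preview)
Your proof is correct. Both you and the paper derive the contradiction $\alpha_1+1=\alpha_1$ by playing the periodicity of $\beta$ against the symmetry of $\alpha$, but the executions differ. The paper divides $\beta$ into $t(\beta)$ consecutive blocks of length $r(\beta)$, splits into two cases according to the parity of $t(\beta)$, and in each case examines the components at the midpoint of the tuple to force $a=-b$ with $a=\alpha_1+1$, $b=-\alpha_1$. Your argument is more direct: you simply look at the residue classes of $1$ and of $r$ modulo $r$, read off two values for $\alpha_r$ (namely $-\alpha_1-1$ from the first orbit via $\alpha_{s-r+1}=-\alpha_r$, and $-\alpha_1$ from the second via $\alpha_r=\alpha_s$), and obtain the contradiction without any case split. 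This is cleaner and slightly shorter; the paper's version has the minor expository benefit of making the ``fold-in-the-middle'' structure of a symmetric tuple visible, but neither proof needs more than the definitions and Remark~\ref{generatedcycles}. One small simplification you could make: since $\alpha$ is symmetric, Theorem~\ref{EqSym} gives $\mathcal{C}_T(\beta)=\mathcal{C}_T(\beta')$ and hence $t(\beta)=t(\beta')$, so after fixing the sign of $\alpha_1$ there is really only one subcase to treat, not two.
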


\begin{proof}
Suppose that $\alpha_1>0$. Since $\alpha$ is symmetric, then $s$ is even, and by Theorem \ref{EqSym}, we can assume that $C_P\in \mathcal{C}_T(\alpha_1+1,\dots,\alpha_s)=\mathcal{C}_T(\beta)$. If $\alpha_1<0$, then also by Theorem \ref{EqSym}, we can assume that $C_P\in \mathcal{C}_T(\alpha_1-1,\dots,\alpha_s)$, but we will treat the case $\alpha_1>0$, and the other case is similar.\\
Since $\alpha$ is symmetric then $\alpha=(\alpha_1,\alpha_2,\dots,\alpha_l,-\alpha_l,\dots,-\alpha_2,-\alpha_1)$ where $l=\frac{s}{2}$, and $\beta=(\alpha_1+1,\alpha_2,\dots,\alpha_l,-\alpha_l,\dots,-\alpha_2,-\alpha_1)$.\\
Set $r'=r(\beta)$, we have $t(\beta)=\frac{s}{r'}$. Suppose to the contrary that $t(\beta)>1$, We have 2 cases:\\
\begin{enumerate}
\item[(a)] $t(\beta)$ is even. Set $t(\beta)=2k$, thus $\beta$ is divided into $2k$ tuples $(\beta_1,\dots,\beta_{r'})$.\\
Set $a$ be the first component of the first tuple, we have $a=\alpha_1+1$. Set $b$ be the last component of the last tuple ($2k^{th}$ tuple), we have $b=-\alpha_1$.\\
Since $r'$ is a period, then the first component $a'$ of the $(k+1)^{th}$ tuple is equal to $a$, and the last component $b'$ of the $k^{th}$ tuple is equal to $b$.\\
But since $\alpha$ is symmetric, $a'=-b'$ because $a'=\alpha_l$ and $b'=-\alpha_l$. Thus $a=-b$ which implies that $\alpha_1+1=-(-\alpha_1)=\alpha_1$ and this leads to a contradiction. So $t(\beta)$ cannot be even.
\item[(b)] $t(\beta)$ is odd. Set $t(\beta)=2k+1$, $k\geq 1$, thus $\beta$ is divided into $2k+1$ tuples $(\beta_1,\dots,\beta_{r'})$, by noting that since $\alpha$ is symmetric, the $(k+1)^{th}$ tuple should be of the form $(\beta_1,\dots,\beta_{\frac{r'}{2}},-\beta_{\frac{r'}{2}},\dots,-\beta_1)$ where $\beta_{\frac{r'}{2}}=\alpha_l$, thus it is symmetric. (Obviously all the other $2k$ tuples have this form since they are all equal). \\
Set $a$ be the first component of the first tuple, we have $a=\alpha_1+1$. Set $b$ be the last component of the last tuple ($(2k+1)^{th}$ tuple), we have $b=-\alpha_1$.\\
Since $r'$ is a period, then the first component $a'$ of the $(k+1)^{th}$ tuple is equal to $a$, and the last component $b'$ of the $(k+1)^{th}$ tuple is equal to $b$.\\
But since the $(k+1)^{th}$ tuple is symmetric, $a'=-b'$. Thus $a=-b$ which implies that $\alpha_1+1=-(-\alpha_1)=\alpha_1$ and this leads to a contradiction. So $T(\beta)$ cannot be an odd integer strictly greater than  1.\\  Thus we conclude that $t(\beta)=1$.
\end{enumerate}
\end{proof}
And finally, with the same hypothesis of Theorem \ref{nonsym}, we can deduce the following:
\begin{corollary}\label{not}
If $(\beta_1\ast 1, \beta_2,\dots, \beta_s)$ is symmetric, Then:
\begin{equation*}
  f_T(\beta_1\ast 1,\beta_2,\dots,\beta_s)= g_T(\beta_1,\beta_2,\dots,\beta_s)
\end{equation*}
\end{corollary}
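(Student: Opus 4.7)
The plan is to deduce this corollary directly by combining the symmetric case of Theorem \ref{nonsym} with Theorem \ref{t=1}. Under the hypothesis that $(\beta_1 \ast 1,\beta_2,\dots,\beta_s)$ is symmetric, the first formula in Theorem \ref{nonsym} applies and gives
\[
f_T(\beta_1\ast 1,\beta_2,\dots,\beta_s) \;=\; g_T(\beta_1,\beta_2,\dots,\beta_s)\cdot t(\beta_1,\beta_2,\dots,\beta_s).
\]
Thus the corollary reduces to the single claim $t(\beta_1,\beta_2,\dots,\beta_s)=1$.

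To establish this, I would set $\alpha := (\beta_1 \ast 1,\beta_2,\dots,\beta_s)$, which is symmetric by hypothesis (so in particular $s$ must be even). By Remark \ref{generatedcycles} (Case~1), any Hamiltonian path $P$ of type $P(\alpha)$ generates a cycle $C_P$ whose type is either $(\beta_1,\beta_2,\dots,\beta_s)$ or $(\beta_1\ast 1,\beta_2,\dots,\beta_s\ast 1)$, and Theorem \ref{EqSym} tells us that since $\alpha$ is symmetric these two cycle-type classes actually coincide. Theorem \ref{t=1}, applied with this symmetric $\alpha$ and with $\beta := (\beta_1,\beta_2,\dots,\beta_s)$, then yields $t(\beta_1,\beta_2,\dots,\beta_s)=1$. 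Substituting back into the displayed equation gives the desired identity.

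The one point requiring a brief remark is the degenerate situation in which no Hamiltonian path of type $P(\alpha)$ exists in $T$, so that Theorem \ref{t=1}'s hypothesis on the existence of $P$ is vacuous. In that case both sides of the corollary are trivially zero (no paths of type $P(\alpha)$ forces $f_T=0$, and any cycle in $\mathcal{C}_T(\beta_1,\dots,\beta_s)$ would generate such a path, forcing $g_T=0$ as well). Alternatively, since Theorem \ref{t=1}'s proof is purely an arithmetic fact about tuples once one sees $\beta$ as the cycle-type arising from a symmetric $\alpha$ via Remark \ref{generatedcycles}, the equality $t(\beta)=1$ holds combinatorially and the substitution goes through without any existence hypothesis. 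I do not expect any real obstacle here; the work consists merely in matching the hypotheses of Theorems \ref{nonsym} and \ref{t=1} correctly.
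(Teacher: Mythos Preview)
Your proposal is correct and follows exactly the paper's approach: the paper's proof simply states that the result follows immediately from Theorem~\ref{nonsym} and Theorem~\ref{t=1}. Your additional care about the degenerate case where no path of type $P(\alpha)$ exists is a nice observation, but as you note, the proof of Theorem~\ref{t=1} is purely arithmetic on the tuple $\beta$ and does not actually require the existence of $P$, so no real obstacle arises.
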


\begin{proof}
The result follows immediately from Theorem \ref{nonsym} and Theorem \ref{t=1}.
\end{proof}
\bigskip

\section{Oriented Hamiltonian cycles}

Based on Theorem \ref{nonsym}, linking between the number of oriented Hamiltonian paths of some type, and the number of oriented Hamiltonian cycles that can be generated by these paths in a tournament, we are now able to establish the main result of Section 2, Theorem \ref{f(a)=f(-a)}, for oriented cycles:
\begin{theorem}\label{g(a)=g(-a)}
Let $\alpha=(\alpha_1,\dots,\alpha_s)\in \mathbb{Z}^s$; $\alpha_i \cdot \alpha_{i+1} <0$, $\alpha_1\geq 0$, $s$ is even if $s\neq 1$, and let $T$ be a tournament of order $n$; $n=\sum\limits_{i=1}^s \mid \alpha_i \mid$. We have:\\
$$g_T(\alpha)=g_T(-\alpha).$$
\end{theorem}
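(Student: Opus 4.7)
The plan is to bootstrap Theorem \ref{f(a)=f(-a)} for paths up to the cycle statement via Theorem \ref{nonsym} (with Corollary \ref{not} handling the symmetric subcase), using a double induction with outer variable $s$ and inner variable $\alpha_1$.

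The base cases fall out of the characterization of equal cycle-type classes. For $s=1$, both tuples $(n)$ and $(-n)$ describe the set of Hamiltonian circuits, since $-\overline{(n)}=(-n)$. For $s=2$, writing $\alpha=(a,-b)$ with $a,b>0$, one has $-\overline{\alpha}=(b,-a)$, which is a cyclic rotation of $-\alpha=(-a,b)$; hence $\mathcal{C}_T(\alpha)=\mathcal{C}_T(-\alpha)$. In the inductive step we may also assume $\alpha_1\geq 1$, because if $\alpha_1=0$ then $\alpha$ standardizes to a cycle type with $s-2$ blocks and the outer induction applies.

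Now apply Theorem \ref{nonsym} first with $\beta=\alpha$. Since $\alpha_1>0$, the operator $\ast 1$ subtracts $1$, and the theorem (or Corollary \ref{not} if $(\alpha_1-1,\alpha_2,\dots,\alpha_s)$ is symmetric) expresses
$$f_T(\alpha_1-1,\alpha_2,\dots,\alpha_s)$$
either as $g_T(\alpha)$ alone or as $\delta(\alpha)\,t(\alpha)\,g_T(\alpha)+\delta(\alpha^*)\,t(\alpha^*)\,g_T(\alpha^*)$, with $\alpha^*=(\alpha_1-1,\alpha_2,\dots,\alpha_{s-1},\alpha_s-1)$. Applying the same theorem to $\beta=-\alpha$, the first entry is now negative so $\ast 1$ adds $1$, and the path type is $(-\alpha_1+1,-\alpha_2,\dots,-\alpha_s)$; using $(-\alpha)^*=-\alpha^*$ and the invariances $t(-\gamma)=t(\gamma)$ and $\delta(-\gamma)=\delta(\gamma)$ (each of the three conditions defining $\delta$ — being symmetric, being a singleton, or the value $n/t$ — is stable under negation), the analogous formula appears with $g_T(-\alpha)$ and $g_T(-\alpha^*)$ in place of $g_T(\alpha)$ and $g_T(\alpha^*)$.

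Theorem \ref{f(a)=f(-a)} forces the two left-hand sides to coincide (its hypothesis $\alpha_1-1\geq 0$ holds). In the symmetric subcase this directly yields $g_T(\alpha)=g_T(-\alpha)$; in the non-symmetric subcase it yields
$$\delta(\alpha)\,t(\alpha)\,\bigl[g_T(\alpha)-g_T(-\alpha)\bigr]=\delta(\alpha^*)\,t(\alpha^*)\,\bigl[g_T(-\alpha^*)-g_T(\alpha^*)\bigr],$$
and the inner inductive hypothesis at $\alpha^*$ (whose first component is $\alpha_1-1<\alpha_1$, with the boundary case $\alpha_1=1$ pushing $\alpha^*$ down to $s-2$ blocks after standardization, covered by the outer induction) gives $g_T(\alpha^*)=g_T(-\alpha^*)$, whence $g_T(\alpha)=g_T(-\alpha)$. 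The main obstacle I anticipate is the bookkeeping at the boundary $\alpha_1=1$: the zero appearing in the path type $(\alpha_1-1,\alpha_2,\dots,\alpha_s)$ and the standardization of $\alpha^*$ to $s-2$ blocks must be handled carefully so that Theorem \ref{nonsym} still delivers the claimed identity and the induction hypotheses engage at the correct tuple.
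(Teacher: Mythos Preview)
Your proposal is correct and follows essentially the same route as the paper: a double induction with outer variable $s$ and inner variable $\alpha_1$, invoking Theorem~\ref{nonsym} (respectively Corollary~\ref{not} in the symmetric subcase) on both $\alpha$ and $-\alpha$, matching the resulting path counts via Theorem~\ref{f(a)=f(-a)}, and cancelling the $\alpha^*$ terms by the inner hypothesis. Your explicit base case $s=2$ and the stated invariances $t(-\gamma)=t(\gamma)$, $\delta(-\gamma)=\delta(\gamma)$ are exactly the bookkeeping the paper uses (the latter via Proposition~\ref{t} and the observation that symmetry and being a singleton are preserved under negation); the paper folds the $s=2$ case into the $\alpha_1=0$ reduction, which collapses $C(0,\alpha_2)$ to a circuit and lands in the $s=1$ base.
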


\begin{proof}
The proof will be done by induction on $s$.\\
\linebreak
If $s=1$, $\alpha=(\alpha_1)=(n)$ and $-\alpha=(-\alpha_1)=(-n)$ and we have $g_T(n)=g_T(-n)$.\\
Suppose that the result is true for $s-2$ blocks, $s>2$. That is, if $\alpha=(\alpha_1,\dots,\alpha_{s-2})\in \mathbb{Z}^{s-2}$; $\alpha_i \cdot \alpha_{i+1} <0$, $\alpha_1\geq 0$, $s-2$ is even, and $T$ is a tournament of order $n$; $n=\sum\limits_{i=1}^{s-2} \mid \alpha_i \mid$, we have: $g_T(\alpha)=g_T(-\alpha)$. Let's prove the result for $s$ blocks. Let $\alpha=(\alpha_1,\dots,\alpha_{s})\in \mathbb{Z}^{s}$; $\alpha_i \cdot \alpha_{i+1} <0$, $\alpha_1\geq 0$, and let $T$ be a tournament of order $n$; $n=\sum\limits_{i=1}^{s} \mid \alpha_i \mid$. We argue by induction on $\alpha_1$. \\
\linebreak
If $\alpha_1=0$, then by induction, $g_T(\alpha)=g_T(0,\alpha_2,\dots,\alpha_{s})=g_T(\alpha_2+\alpha_{s},\alpha_3,\dots,\alpha_{s-1})=g_T(-\alpha_2-\alpha_{s},-\alpha_3,\dots,-\alpha_{s-1})=g_T(0,-\alpha_2,-\alpha_3,\dots,-\alpha_{s-1},-\alpha_{s})=g_T(-\alpha)$.\\
So suppose that $\alpha_1>0$ and the result is true when the first block is of length $\alpha_1-1$, and let's prove it when the first block is of length $\alpha_1$. \\
\linebreak
We will consider two cases:
\begin{enumerate}
\item[(a)] The tuple $(\alpha_1-1,\alpha_2,\dots,\alpha_{s})$ is non-symmetric.\\
We have $\alpha_1-1\geq 0$, thus by Theorem \ref{nonsym}, 
\begin{eqnarray*}
f_T(\alpha_1-1,\alpha_2,\dots,\alpha_s)= && \delta(\alpha_1,\dots,\alpha_s).t(\alpha_1,\dots,\alpha_s).g_T(\alpha_1,\dots,\alpha_s) \\
&& +\delta(\alpha_1 -1,\alpha_2,\dots,\alpha_{s-1},\alpha_s-1).t(\alpha_1 -1,\alpha_2,\dots,\alpha_{s-1},\alpha_s-1) \\
&& .g_T(\alpha_1 -1,\alpha_2,\dots,\alpha_{s-1},\alpha_s-1) \\
= && \delta(\beta_1).t(\beta_1).g_T(\beta_1)+\delta(\beta'_1).t(\beta'_1).g_T(\beta'_1)
\end{eqnarray*}
where $\delta(\gamma)=
\left\lbrace
\begin{array}{ccc}
1  & \mbox{if} & \gamma \ is \ non-symmetric \ and  \ not \ a \ singleton\\
2  & \mbox{if} & \gamma \ \ is \ symmetric\\
\frac{n}{t(\gamma)}  & \mbox{if} & \gamma \ \ is \ a \ singleton \\
\end{array}\right. $ \\
\linebreak
\noindent Now consider the tuple $(-\alpha_1+1,-\alpha_2,\dots,-\alpha_{s})$ which is also non-symmetric. We have $-\alpha_1+1\leq 0$, thus by Theorem \ref{nonsym}, 
\begin{eqnarray*}
f_T(-\alpha_1+1,-\alpha_2,\dots,-\alpha_s)= && \delta(-\alpha_1,\dots,-\alpha_s).t(-\alpha_1,\dots,-\alpha_s).g_T(-\alpha_1,\dots,-\alpha_s) \\
&& +\delta(-\alpha_1+1 ,-\alpha_2,\dots,-\alpha_s+1).t(-\alpha_1+1 ,-\alpha_2,\dots,-\alpha_s+1) \\
&& .g_T(-\alpha_1+1 ,-\alpha_2,\dots,-\alpha_s+1) \\
=&& \delta(\beta_2).t(\beta_2).g_T(\beta_2)+\delta(\beta'_2).t(\beta'_2).g_T(\beta'_2)
\end{eqnarray*}
where $\delta(\gamma)=
\left\lbrace
\begin{array}{ccc}
1  & \mbox{if} & \gamma \ is \ non-symmetric \ and \ not \ a \ singleton\\
2  & \mbox{if} & \gamma \ \ is \ symmetric\\
\frac{n}{t(\gamma)}  & \mbox{if} & \gamma \ \ is \ a \ singleton \\
\end{array}\right. $
\\
\linebreak
Since $(-\alpha_1+1,-\alpha_2,\dots,-\alpha_s)=-(\alpha_1-1,\alpha_2,\dots,\alpha_s)$, then by Theorem \ref{f(a)=f(-a)} we have $f_T(\alpha_1-1,\alpha_2,\dots,\alpha_s)=f_T(-\alpha_1+1,-\alpha_2,\dots,-\alpha_s)$. As a result, $$\delta(\beta_1).g_T(\beta_1).t(\beta_1)+\delta(\beta'_1).g_T(\beta'_1).t(\beta'_1)=\delta(\beta_2).g_T(\beta_2).t(\beta_2)+\delta(\beta'_2).g_T(\beta'_2).t(\beta'_2).$$
But, since $\beta_2=-\beta_1$ and $\beta'_2=-\beta'_1$ thus if $\beta_1$ is non-symmetric and not a singleton (resp. is a singleton, or is symmetric), so is $\beta_2$, and similarly for $\beta'_1$ and $\beta'_2$, so $\delta(\beta_1)=\delta(\beta_2)$, and $\delta(\beta'_1)=\delta(\beta'_2)$, and also by Proposition \ref{t} we have $t(\beta_2)=t(\beta_1)$ and $t(\beta'_2)=t(\beta'_1)$. \\
Moreover, since $\alpha_1-1 < \alpha_1$, then by induction $g_T(\beta'_1)=g_T(\beta'_2)$, hence we have $$g_T(\beta_1)=g_T(\beta_2),$$ and the result follows.\\
\item[(b)] The tuple $(\alpha_1-1,\alpha_2,\dots,\alpha_{s})$ is symmetric.\\
We have $\alpha_1-1\geq 0$, thus by Corollary \ref{not}, $$f_T(\alpha_1-1,\alpha_2,\dots,\alpha_s)=g_T(\alpha_1,\alpha_2,\dots,\alpha_s).$$
Now consider the tuple $(-\alpha_1+1,-\alpha_2,\dots,-\alpha_{s})$ which is also symmetric. We have $-\alpha_1+1\leq 0$, thus by Corollary \ref{not}, $$f_T(-\alpha_1+1,-\alpha_2,\dots,-\alpha_s)=g_T(-\alpha_1,-\alpha_2,\dots,-\alpha_s).$$
Since by Theorem \ref{f(a)=f(-a)} we have $$f_T(\alpha_1-1,\alpha_2,\dots,\alpha_s)=f_T(-\alpha_1+1,-\alpha_2,\dots,-\alpha_s),$$
we get $$g_T(\alpha_1,\dots,\alpha_s)=g_T(-\alpha_1,\dots,-\alpha_s).$$
\end{enumerate}
\end{proof}

\section{Digraphs of maximal degree $\Delta \leq 2$}

After establishing Theorem \ref{f(a)=f(-a)} and Theorem \ref{g(a)=g(-a)}, proving that a tournament and its complement contain the same number of oriented Hamiltonian paths and cycles of any given type, we may generalize this fact to any digraph of maximal degree 2: If $H$ is a digraph with maximal degree $\Delta(G(H))\leq 2$, then $f_T(H)=f_{\overline{T}}(H)$, where $f_T(H)$ is the number of copies of the digraph $H$ in a tournament $T$.\\
\linebreak
For this purpose, we first need to prove several lemmas:

\begin{lemma}\label{1}
Let $\alpha=(\alpha_1,\dots,\alpha_s)\in \mathbb{Z}^s$; $\alpha_i \cdot \alpha_{i+1} <0$, $\alpha_1 \geq 0$, and let $T$ be a tournament of order $n$; $n\geq \sum\limits_{i=1}^s \mid \alpha_i \mid +1$. We have:\\
$$f_T(\alpha)=f_T(-\alpha).$$
\end{lemma}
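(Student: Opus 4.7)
The plan is to reduce to Theorem \ref{f(a)=f(-a)} by partitioning over all vertex subsets of the appropriate size. Set $m = \sum_{i=1}^{s} |\alpha_i| + 1$, which is precisely the number of vertices in any oriented path of type $P(\alpha)$ (or $P(-\alpha)$). The hypothesis $n \geq m$ ensures $T$ has enough vertices to contain such paths; the interesting case is when $n > m$, since $n = m$ is exactly Theorem \ref{f(a)=f(-a)}.

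The first step is to observe that every path $P \in \mathcal{P}_T(\alpha)$ has vertex set $V(P)$ of size $m$, and $P$ is a Hamiltonian path of type $P(\alpha)$ in the induced subtournament $T[V(P)]$. Conversely, every Hamiltonian path of type $P(\alpha)$ in some $T[S]$ with $|S| = m$ gives rise to exactly one path of type $P(\alpha)$ in $T$. This yields the partition
\[
f_T(\alpha) \;=\; \sum_{\substack{S \subseteq V(T) \\ |S| = m}} f_{T[S]}(\alpha),
\]
and analogously for $-\alpha$.

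The second step is to apply Theorem \ref{f(a)=f(-a)} termwise. For each fixed $S \subseteq V(T)$ with $|S| = m$, the induced subtournament $T[S]$ has order $m = \sum |\alpha_i| + 1$, the tuple $\alpha$ still satisfies $\alpha_i \cdot \alpha_{i+1} < 0$ and $\alpha_1 \geq 0$, and $-\alpha$ still satisfies the sign-alternation condition (its first entry is nonpositive, but Theorem \ref{f(a)=f(-a)} is symmetric in $\alpha$ and $-\alpha$, so one can apply it with the roles swapped if one insists on the sign hypothesis). In either reading, the conclusion is $f_{T[S]}(\alpha) = f_{T[S]}(-\alpha)$.

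Summing over all $S$ of size $m$ gives $f_T(\alpha) = f_T(-\alpha)$, concluding the proof. There is no real obstacle here; the only subtlety is a bookkeeping one, namely confirming that the sign-normalization hypothesis $\alpha_1 \geq 0$ in Theorem \ref{f(a)=f(-a)} does not prevent us from invoking it on $-\alpha$ (whose first entry may be negative). This is harmless because the identity $f_{T[S]}(\alpha) = f_{T[S]}(-\alpha)$ is symmetric in the two tuples: applying the theorem to whichever of $\alpha, -\alpha$ has nonnegative first entry yields the same equality.
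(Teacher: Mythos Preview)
Your proof is correct and follows essentially the same approach as the paper: set $m=\sum_i|\alpha_i|+1$, partition $\mathcal{P}_T(\alpha)$ according to the $m$-element vertex set spanned by each path, and apply Theorem~\ref{f(a)=f(-a)} to each induced subtournament $T[S]$ before summing. Your explicit remark about the sign hypothesis $\alpha_1\geq 0$ being harmless (by symmetry of the conclusion) is a small clarification the paper omits, but otherwise the arguments are the same.
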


\begin{proof}
Let $m=\sum\limits_{i=1}^s \mid \alpha_i \mid +1$. Every oriented path in $T$ of type $P(\alpha)$ is a Hamiltonian path of type $P(\alpha)$ contained in a subtournament $T'$ of $T$ of order $m$. By Theorem \ref{f(a)=f(-a)}, $f_{T'}(\alpha)=f_{T'}(-\alpha)$. Moreover, if we consider another subtournament $T''$ of $T$, of order $m$, $T'' \neq T'$, then $\mathcal{P}_{T'}(\alpha) \cap \mathcal{P}_{T''}(\alpha)= \emptyset$, because every Hamiltonian path in $T'$ differs with a least one vertex from every Hamiltonian path in $T''$. \\So let $V(T)=\bigcup\limits_{X\subseteq V(T), \ |X|=m} X$, we have: $$f_T(\alpha)=\sum\limits\limits_{X\subseteq V(T), \ |X|=m} f_{\langle X\rangle}(\alpha)=\sum\limits_{X\subseteq V(T), \ |X|=m} f_{\langle X\rangle}(-\alpha)=f_T(-\alpha),$$
and we get our result.
\end{proof}

Similarly, we may prove the same result for cycles in tournaments:

\begin{lemma}\label{2}
Let $\alpha=(\alpha_1,\dots,\alpha_s)\in \mathbb{Z}^s$; $\alpha_i \cdot \alpha_{i+1} <0$, $\alpha_1 \geq 0$, and let $T$ be a tournament of order $n$; $n\geq \sum\limits_{i=1}^s \mid \alpha_i \mid$. We have:
$$g_T(\alpha)=g_T(-\alpha).$$
\end{lemma}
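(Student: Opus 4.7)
The plan is to reduce Lemma \ref{2} to Theorem \ref{g(a)=g(-a)} by a subtournament-averaging argument exactly parallel to the one used in the proof of Lemma \ref{1}. Set $m=\sum_{i=1}^s |\alpha_i|$. Any oriented cycle $C$ in $T$ of type $C(\alpha)$ has exactly $m$ vertices, so it lies in the subtournament $\langle X\rangle$ where $X=V(C)\subseteq V(T)$, and there it is a Hamiltonian cycle of type $C(\alpha)$. Conversely, any Hamiltonian cycle of type $C(\alpha)$ in a subtournament $\langle X\rangle$ with $|X|=m$ is an oriented cycle of type $C(\alpha)$ in $T$.

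From this I would derive the partition
\begin{equation*}
\mathcal{C}_T(\alpha)=\bigsqcup_{X\subseteq V(T),\ |X|=m}\mathcal{C}_{\langle X\rangle}(\alpha),
\end{equation*}
the union being disjoint because cycles living in different subtournaments have different vertex sets (a cycle in $\langle X\rangle$ uses all of $X$). The same decomposition holds for $\mathcal{C}_T(-\alpha)$.

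Next I would apply Theorem \ref{g(a)=g(-a)} to each subtournament $\langle X\rangle$ of order $m$. The hypotheses of that theorem are met: $\alpha_i\cdot\alpha_{i+1}<0$ is inherited from the cycle-type assumption (which also forces $s$ even when $s\neq 1$, as remarked in Section~2), $\alpha_1\geq 0$ is our standing hypothesis, and $m=\sum|\alpha_i|$ is exactly the order of $\langle X\rangle$. Therefore $g_{\langle X\rangle}(\alpha)=g_{\langle X\rangle}(-\alpha)$ for every such $X$. Summing over all $X\subseteq V(T)$ with $|X|=m$ yields
\begin{equation*}
g_T(\alpha)=\sum_{|X|=m} g_{\langle X\rangle}(\alpha)=\sum_{|X|=m} g_{\langle X\rangle}(-\alpha)=g_T(-\alpha).
\end{equation*}

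There is no real obstacle here; the only step that deserves a moment of care is verifying the disjointness of the union over $X$ and confirming that Theorem \ref{g(a)=g(-a)} applies verbatim to each induced subtournament. Once those two points are checked, the conclusion is immediate.
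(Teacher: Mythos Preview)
Your argument is correct and is exactly the approach the paper intends: the paper states Lemma \ref{2} with the remark ``Similarly, we may prove the same result for cycles in tournaments,'' meaning the subtournament-summing argument of Lemma \ref{1} carried over verbatim with Theorem \ref{g(a)=g(-a)} in place of Theorem \ref{f(a)=f(-a)}. Nothing further is needed.
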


\begin{lemma}\label{Imp}
Let $T$ be a tournament, and let $H$ be a digraph with $\Delta(G(H))\leq 2$ and such that its connected components are mutually isomorphic. Then the number of copies of $H$ in $T$ and that in its complement $\overline{T}$ are the same.
\end{lemma}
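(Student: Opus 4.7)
The plan is to reduce the count of copies of $H$ in $T$ to counts of copies of a single connected component in vertex-subtournaments, and then apply Lemma \ref{1} or Lemma \ref{2} to those smaller counts. Let $K$ denote the common connected component of $H$. Since the underlying graph of $H$ has maximum degree at most $2$, the component $K$ is either an oriented path $P(\alpha)$, an oriented cycle $C(\alpha)$, or an isolated vertex, and $H$ is the disjoint union of $k$ copies of $K$. Set $m=|V(K)|$.

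First I would establish the following local identity: for every $V\subseteq V(T)$ with $|V|=m$, the subtournament $T[V]$ and its complement contain the same number of copies of $K$. Indeed $\overline{T}[V]=\overline{T[V]}$, and reversing all arcs of a Hamiltonian path (resp.\ cycle) in $T[V]$ of type $P(\alpha)$ (resp.\ $C(\alpha)$) produces a path (resp.\ cycle) of type $-\alpha$. After the trivial standardisation replacing $\alpha$ by $-\overline{\alpha}$ if necessary to ensure $\alpha_1\geq 0$, the equality $N_{T[V]}(K)=N_{\overline{T}[V]}(K)$ is exactly Lemma \ref{1} when $K$ is a path and Lemma \ref{2} when $K$ is a cycle, applied in the order-$m$ tournament $T[V]$ so that the copies are automatically Hamiltonian. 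The case where $K$ is a single vertex is trivial.

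Next I would count copies of $H$ by partitioning according to vertex supports. A subdigraph of $T$ isomorphic to $H$ is precisely an unordered collection $\{C_1,\dots,C_k\}$ of pairwise vertex-disjoint copies of $K$ in $T$, and distinct unordered collections give distinct subdigraphs because the components of their union are uniquely recovered. Passing to ordered tuples and then dividing by $k!$ gives
\[
N_T(H)\;=\;\frac{1}{k!}\sum_{(V_1,\dots,V_k)}\,\prod_{i=1}^k N_{T[V_i]}(K),
\]
where the sum ranges over ordered $k$-tuples of pairwise disjoint $m$-subsets of $V(T)$. The identical formula holds verbatim with $\overline{T}$ in place of $T$, and the index set of the sum depends only on $V(T)=V(\overline{T})$. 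Applying the local identity term by term yields $N_T(H)=N_{\overline{T}}(H)$.

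I do not foresee a substantial obstacle; the only point requiring care is the first step, where one must confirm that complementation of a subtournament commutes with restriction to a vertex set and that, in terms of types of paths and cycles, this operation is exactly the sign flip $\alpha\mapsto -\alpha$. Once the local identity is in place, the global statement is a purely formal counting reduction via the displayed double-counting formula.
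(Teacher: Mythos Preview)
Your proposal is correct and follows essentially the same route as the paper: decompose $H$ into $k$ isomorphic components, express $N_T(H)$ as $\frac{1}{k!}\sum\prod N_{T[V_i]}(K)$ over ordered tuples of disjoint $m$-subsets, and apply Lemma~\ref{1} or Lemma~\ref{2} termwise to pass to $\overline{T}$. The paper's proof is the same argument with only cosmetic differences in notation.
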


\begin{proof}
Since $H$ is a digraph with $\Delta(G(H))\leq 2$ and such that its connected components are isomorphic, then $H=H_1\cup H_2\cup \dots \cup H_r$ where the digraphs $H_i$, $1\leq i \leq r$, are its connected components, with $|H_i|=m$ $\forall$ $1\leq i \leq r$, and such that they are either all paths of the same type, say $P(\alpha)$, or all cycles of the same type, $C(\beta)$. If $T$ contains a copy of $H$, then since the digraphs $H_i$, $1\leq i \leq r$, are disjoint, the copy of every digraph $H_i$ is a spanning subdigraph of a subtournament $T_i$ of $T$, such that the subtournaments $T_i$, $1\leq i \leq r$, are also disjoint, with $|V(T_i)|=m$ $\forall$ $1\leq i \leq r$. Note that $rm \leq n$.\\
Let's consider $r$ disjoint subtournaments of $T$, $T_i$ , $1\leq i \leq r$, all of order $m$, and suppose that $T$ contains a copy of $H$ such that $\forall$ $1\leq i \leq r$, $H_i$ has a copy in $T_i$. As $f_{T_i}(H_i)$ denotes the number of copies of $H_i$ in the subtournament $T_i$, then the number of copies of $H$ in $T$, such that the copy of $H_i$ is a spanning subdigraph of $T_i$, is: $$\prod_{i=1}^r f_{T_i}(H_i).$$
Now if we consider any permutation $\sigma$ of the subtournaments $T_i$, and since all the digraphs $H_i$ are isomorphic, then $\forall$ $1\leq i \leq r$, if $H_i$ has a copy in $T_i$, then $H_i$ also has a copy in $T_{\sigma(i)}$.
But, also since all the digraphs $H_i$ are isomorphic, then the copies of $H$ obtained in $T$ such that the copy of each $H_i$ is a spanning subdigraph of $T_i$ are the same as the ones obtained in $T$  such that the copy of each $H_i$ is a spanning subdigraph of $T_{\sigma(i)}$.\\
Let's compute now $f_T(H)$, the total number of copies of $H$ in $T$. \\
Let $\mathcal{L}=\lbrace ( T_1,T_2,\dots,T_r );$ $T_i$ subtournament of $T$ $\forall$ $1\leq i \leq r$, $T_i\cap T_j=\emptyset$ $\forall$ $1\leq i,j \leq r,$ $|V(T_i)|=m\rbrace$. We have:
$$f_T(H)=\sum\limits_{( T_1,T_2,\dots,T_r )\in \mathcal{L}} \frac{\prod_{i=1}^r f_{T_i}(H_i)}{r!}.$$
However, by Lemma \ref{1} and Lemma \ref{2}, we have that $\forall$ $1\leq i \leq r$, $$f_{T_i}(H_i)=f_{\overline{T_i}}(H_i).$$
So let $\mathcal{L'}=\lbrace ( \overline{T_1},\overline{T_2},\dots,\overline{T_r} );$ $T_i$ subtournament of $T$ $\forall$ $1\leq i \leq r$, $T_i\cap T_j=\emptyset$ $\forall$ $1\leq i,j \leq r,$ $|V(T_i)|=m\rbrace$, we get: $$f_T(H)=\sum\limits_{( T_1,T_2,\dots,T_r )\in \mathcal{L}} \frac{\prod_{i=1}^r f_{T_i}(H_i)}{r!}=\sum\limits_{( \overline{T_1},\overline{T_2},\dots,\overline{T_r} )\in \mathcal{L'}} \frac{\prod_{i=1}^r f_{\overline{T_i}}(H_i)}{r!}=f_{\overline{T}}(H),$$
and the result follows.
\end{proof}

\noindent We may now prove the main result of this section:

\begin{theorem}
Let $T$ be a tournament and let $H$ be a digraph with $\Delta(G(H))\leq 2$. Then the number of copies of $H$ in $T$ and its complement $\overline{T}$ is the same.
\end{theorem}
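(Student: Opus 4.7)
The plan is to reduce to Lemma \ref{Imp} by decomposing $H$ into its isomorphism classes of components. Since $\Delta(G(H)) \leq 2$, every connected component of $H$ is either an oriented path or an oriented cycle. Write $H = H_1 \cup H_2 \cup \dots \cup H_r$ as the disjoint union of its connected components, and group them into isomorphism classes $\mathcal{I}_1, \ldots, \mathcal{I}_q$ with multiplicities $n_1, \ldots, n_q$ (so $\sum_k n_k = r$). The case $q=1$ is exactly Lemma \ref{Imp}, and I want to adapt its counting argument to the general setting.

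The key is to mimic the counting formula used in Lemma \ref{Imp} with a refined symmetry factor. Let $\mathcal{L}$ denote the set of ordered tuples $(T_1, \ldots, T_r)$ of pairwise disjoint subtournaments of $T$ with $|V(T_i)| = |V(H_i)|$. Each copy of $H$ in $T$ (viewed as an arc-set) determines a unique partition of its vertex set into $r$ classes, each spanning an isomorphic copy of some $H_i$; however, it does not distinguish components within the same isomorphism class. Hence the ordered sum $\sum_{(T_1,\ldots,T_r)\in \mathcal{L}} \prod_{i=1}^r f_{T_i}(H_i)$ overcounts $f_T(H)$ by a factor of $\prod_k n_k!$, giving
\[
f_T(H) \;=\; \frac{1}{\prod_{k=1}^q n_k!} \sum_{(T_1,\ldots,T_r)\in\mathcal{L}} \prod_{i=1}^r f_{T_i}(H_i).
\]

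Now, by Lemma \ref{1} applied to each path-component and Lemma \ref{2} applied to each cycle-component, one has $f_{T_i}(H_i) = f_{\overline{T_i}}(H_i)$ for every $i$. Moreover, the map $(T_1,\ldots,T_r) \mapsto (\overline{T_1},\ldots,\overline{T_r})$ is a bijection between $\mathcal{L}$ and the analogous set $\mathcal{L}'$ for $\overline{T}$, because subtournaments of $\overline{T}$ are precisely the complements of subtournaments of $T$ on the same vertex sets, and disjointness is preserved. Substituting these equalities term by term on the right-hand side then yields $f_T(H) = f_{\overline{T}}(H)$.

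The main obstacle is justifying the symmetry factor $\prod_k n_k!$ carefully: one must verify that two elements of $\mathcal{L}$, together with their chosen spanning copies, produce the same subdigraph of $T$ if and only if they differ by a permutation that exchanges tuple-slots labelling components within a single isomorphism class. This is essentially the same combinatorial point addressed by the division by $r!$ in Lemma \ref{Imp} (which corresponds to the case $q=1$), and it rests on the fact that the decomposition of a subdigraph of $T$ into its connected components is intrinsic to its arc-set, whereas the labelling of those components by the indices $1,\ldots,r$ is not. Once this factor is pinned down, the proof reduces to a clean application of Lemmas \ref{1} and \ref{2} inside the sum.
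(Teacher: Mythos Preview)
Your proof is correct and follows essentially the same approach as the paper. The only organizational difference is that the paper first packages the components into isomorphism-class blocks $H^1,\dots,H^t$ and invokes Lemma~\ref{Imp} on each block (so no symmetry factor appears at the top level), whereas you work directly with the individual components and carry the combined factor $\prod_k n_k!$; unrolling the paper's two-tier argument yields exactly your formula.
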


\begin{proof}
Since $\Delta(G(H))\leq 2$, then $H$ is a disjoint union of directed paths and cycles.
Write $H$ as $H= \bigcup_{i=1}^t H^i$, where each $H^i$ is a subdigraph of $H$ whose all connected components are isomorphic, and which is maximal with this property. The connected components of each $H^i$ are either all paths of the same type or cycles of the same type. Note that the digraphs $H^i$, $1\leq i \leq t$, are disjoint, and non-isomorphic.\\
If $T$ contains a copy of $H$, then since the digraphs $H^i$, $1\leq i \leq t$, are disjoint, the copy of every digraph $H^i$ is a spanning subdigraph of a subtournament $T^i$ of $T$, and such that the subtournaments $T^i$, $1\leq i \leq t$, are also disjoint, with $|V(T^i)|=|V(H^i)|$ $\forall$ $1\leq i \leq t$. \\
As we did in the previous lemma, let's consider $t$ disjoint subtournaments of $T$, $T^i$, $1\leq i \leq t$, and such that $|V(T^i)|=|V(H^i)|$, and suppose that $T$ contains a copy of $H$ such that $\forall$ $1\leq i \leq t$, $H^i$ has a copy in $T^i$. The number of copies of $H$ in $T$, such that the copy of $H^i$ is a spanning subdigraph of $T^i$, is: $$\prod_{i=1}^t f_{T^i}(H^i).$$
However, if we consider any permutation $\sigma$ of the subtournaments $T^i$, and since all the digraphs $H^i$ are non-isomorphic, then if $T$ contains a copy of $H$ such that $\forall$ $1\leq i \leq t$, $H^i$ has a copy in $T^{\sigma(i)}$, the copies of $H$ obtained in $T$ such that the copy of each $H^i$ is a spanning subdigraph of $T^i$ are all different from those obtained in $T$  such that the copy of each $H^i$ is a spanning subdigraph of $T^{\sigma(i)}$.\\
\linebreak 
So let's compute now the total number of copies of $H$ in $T$,  $f_T(H)$. \\
\linebreak
Let $\mathcal{L}=\lbrace ( T^1,T^2,\dots,T^t );$ $T^i$ subtournament of $T$ $\forall$ $1\leq i \leq t$, $T^i\cap T^j=\emptyset$ $\forall$ $1\leq i,j \leq t,$ $|V(T_i)|=|V(H^i)|\rbrace$. We have:
$$f_T(H)=\sum\limits_{( T^1,T^2,\dots,T^t )\in \mathcal{L}} \prod_{i=1}^t f_{T^i}(H^i).$$
\linebreak
However, by Lemma \ref{Imp}, since the connected components of each digraph $H^i$ are isomorphic, we have that $\forall$ $1\leq i \leq t$, $f_{T^i}(H^i)=f_{\overline{T^i}}(H^i)$.\\
So let $\mathcal{L'}=\lbrace ( \overline{T^1},\overline{T^2},\dots,\overline{T^t} );$ $T^i$ subtournament of $T$ $\forall$ $1\leq i \leq t$, $T^i\cap T^j=\emptyset$ $\forall$ $1\leq i,j \leq t,$ $|V(T^i)|=|V(H^i)|\rbrace$, we get:
$$f_T(H)=\sum\limits_{( T^1,T^2,\dots,T^t )\in \mathcal{L}} \prod_{i=1}^t f_{T^i}(H^i)=\sum\limits_{( \overline{T^1},\overline{T^2},\dots,\overline{T^t} )\in \mathcal{L'}} \prod_{i=1}^t f_{\overline{T^i}}(H^i)=f_{\overline{T}}(H),$$
hence:
$$f_T(H)=f_{\overline{T}}(H),$$
and this concludes the proof.
\end{proof}
\begin{remark}
Let $T$ be a tournament on $n+1$ vertices, formed by a directed $n$-cycle $C=v_1v_2\dots v_{n}$, with its internal edges, where these edges may have any orientations, and a vertex $v$ of in-degree equal to zero (a source), adjacent to the $n$ vertices of the cycle ($d^+_T(v)=n$). Then the complement $\overline{T}$ of this tournament is formed by a directed $n$-cycle, $C'=v_1v_nv_{n-1}\dots v_2$, and its internal edges which have opposite orientations of those of $T\langle C\rangle$, and a vertex $v$ of out-degree equal to zero (a sink) adjacent to all the vertices of $C'$. Also note that since $C$ and $C'$ are directed cycles, then $\forall$ $x\in C$, $d^+_T(x)\leq n-1$ and $\forall$ $y\in C'$, $d^+_{\overline{T}}(y)\leq n-1$.\\
Thus if we consider a digraph $H$ on $n+1$ vertices, formed by a vertex $y$ and $n$ out-neighbors of $y$, which is a digraph of maximal degree $\Delta(G(H))=n$, the number of copies of $H$ in $T$ is equal to one, while there are no such copies in $\overline{T}$.
\end{remark}

Based on the remark above, we finally ask the following:\\
Let $f_T(H)$ denote the number of copies of a digraph $H$ in a tournament $T$.

\begin{problem}
Can we characterize the set $\mathcal{H}$ of all digraphs $H$ such that $f_T(H)=f_{\overline{T}}(H)$ for any tournament $T$?
\end{problem}

\bigskip
\bigskip

\noindent \textbf{Acknowledgments.}
We would like to thank the Lebanese University for the PhD grant, and Campus France for the Eiffel excellence scholarship (Eiffel 2018).

\bigskip

 \medskip
 
\end{document}